\newtheorem{theorem}{Theorem}[section]
\newtheorem*{theorem*}{Theorem}
\newtheorem*{cor*}{Corollary}
\newtheorem*{conj*}{Conjecture}
\newtheorem*{lemma*}{Lemma}
\newtheorem*{prop*}{Proposition}
\newtheorem{lem}[theorem]{Lemma}
\newtheorem{cor}[theorem]{Corollary}
\newtheorem{prop}[theorem]{Proposition}
\newtheorem{conj}[theorem]{Conjecture}
\theoremstyle{definition}
\newtheorem{example}[theorem]{Example}
\newtheorem{definition}[theorem]{Definition}
\newtheorem{proposition}[theorem]{Proposition}
\renewcommand{\emptyset}{\varnothing}
\renewcommand{\epsilon}{\varepsilon}
\newcommand{\dg}{\mathrm{dg}}
\newcommand{\conv}{\mathrm{conv}}
\newcommand{\Newt}{\mathrm{Newton}}
\newcommand{\cL}{\mathcal{L}}
\newcommand{\supp}{\mathrm{supp}}
\newcommand{\mult}{\mathrm{mult}}
\title{Saturation for Non-Symmetric Macdonald Polynomials}
\author[Bechtloff Weising]{Milo Bechtloff Weising}
\address{Department of Mathematics, Virginia Tech,
Blacksburg, VA 24061}
\email{milojbw@vt.edu}
\author[Black]{Alexander E. Black}
\address{Department of Mathematics, Bowdoin College, Brunswick, ME 04011}
\email{a.black@bowdoin.edu}
\begin{document}

\maketitle

\begin{abstract}  
We prove that supports of non-symmetric Macdonald polynomials are $M$-convex. As a consequence, we resolve a 2019 conjecture of Monical, Tokcan, and Yong that they have the saturated Newton polytope property. As a corollary we show that affine Demazure characters of type $\mathrm{GL}$ have M-convex supports and therefore the saturated Newton polytope property answering a 2022 open question of Besson and Hong. By their results, we then find that certain affine analogs of Bruhat interval polytopes in type $\mathrm{GL}$ are generalized permutahedra. To prove these results, we find a novel interpretation of the Haglund--Haiman--Loehr formula for non-symmetric Macdonald polynomials in terms of colorings of Dyck graphs. 
\end{abstract}

\section{Introduction}

Understanding the coefficients of polynomials from  representation theory and algebraic geometry is a  central goal of algebraic combinatorics. Such polynomials tend to be defined implicitly as the result of a sequence of applications of operators such as divided difference operators and their variants or as a class in cohomology or K-theory. Even determining which coefficients of these polynomials are nonzero is nontrivial and carries meaning such as whether a certain irreducible representation appears in an isotypic decomposition. An approach to this problem explored in depth in \cite{SNPOriginal} is to give a polyhedral characterization of the monomials with nonzero coefficient. Namely, given a polynomial $f \in \mathbb{R}[x_{1}, x_{2}, \dots, x_{n}]$ such that $f(x) = \sum_{ \alpha \in \mathbb{Z}_{\geq 0}^{n}} c_{\alpha} x^{\alpha}$ with $x^{\alpha} = x_{1}^{\alpha_{1}}\dots x_{n}^{\alpha_{n}}$, one may compute its Newton polytope given by $\Newt(f) = \text{conv}(\{\alpha \in \mathbb{Z}^{n}: c_{\alpha} \neq 0\})$. Each monomial corresponds to a lattice point in that polytope. We say that $f$ has a \textbf{saturated Newton polytope} or is \textbf{saturated} if the converse is true: 
\[\Newt(f) \cap \mathbb{Z}^{n} = \{\alpha \in \mathbb{Z}^{n}: c_{\alpha} \neq 0\}.\]
Equivalently, a polynomial is saturated if a monomial appears with nonzero coefficient if and only if its exponent vector satisfies a system of linear inequalities. A first example of this phenomenon is the determinant. Its Newton polytope is the Birkhoff polytope, and the only integer points it contains are precisely its vertices. Hence, the determinant has a saturated Newton polytope. Any square free polynomial is more generally saturated, but the phenomenon becomes rarer and harder to verify in general.

Saturation gained attention recently as it holds for Lorentzian polynomials \cite{LorentzianOrig}. These polynomials appear in connection to the Kahler package of Adiprasito, Huh, and Katz \cite{AHKBreakthrough} used to resolve longstanding conjectures in matroid theory related to log-concavity, as well as in the theory of real stability of polynomials used to establish fast mixing for random walks on basis exchange graphs of matroids \cite{LogConcavePoly1}. Lorentzian polynomials have a stronger property than saturation. Their supports are \textbf{M-convex sets}, meaning that they have saturated Newton polytopes, and their Newton polytope is a generalized permutahedron in the sense of Postnikov \cite{GenPermOrig}. In fact, it was shown in \cite{LorentzianOrig} that a polynomial has M-convex support if and only if it has the same support set as some Lorentzian polynomial. Proving the support is M-convex has more utility than saturation on its own as M-convex sets are closed under Minkowski sums, which implies that polynomials with positive coefficients and M-convex support are closed under multiplication. This property is not true for saturation in general, where the square of a polynomial with saturated Newton polytope need not still be saturated.   

In \cite{SNPOriginal}, Monical, Tokcan, and Yong observed that many polynomials appearing in algebraic combinatorics have saturated Newton polytopes. They proved this, for example, for Schur polynomials, symmetric and modified Macdonald polynomials, elementary symmetric functions, and resultants among several other examples. They also provided non-examples such as discriminants. Based on computational evidence from small examples, they conjectured Schubert polynomials, double Schubert polynomials, key polynomials, Grothendieck polynomials, Kronecker products of Schur polynomials, Lascoux polynomials, Demazure atoms, and non-symmetric Macdonald polynomials were all saturated. 

Later, Fink, M{\'e}sz{\'a}ros, and St. Dizier proved that Schubert polynomials and key polynomials had the saturated Newton polytope property via an exact computation of their Newton polytopes as a Minkowski sum of matroid base polytopes \cite{SchubertSNP}. Besson, Jeralds, and Kiers later extended the results for key polynomials with an alternative proof for more general Coxeter type \cite{KeySNPGeneralType}, and Castillo, Cid Ruiz, Mohammadi, and Monta\~{n}o showed saturation for double Schubert polynomials using the theory of multidegrees from commutative algebra \cite{DoubleSchubertSNP}. For Grothendieck polynomials and Kronecker products of Schur polynomials, several special cases of the conjectures are known to hold \cite{VexillaryGrothendieck, Schubert01Grothendieck, FlowGrothendieck,SymmetricGrothendieck, FireworksGrothendieck, KroneckerProduct, HafnerCMpolynomials}. Beyond those conjectures, in connection to the theory of Lorentzian polynomials, several other families of polynomials from algebraic combinatorics are known to have saturated Newton polytopes \cite{ChromaticSNP, LogConcaveManyPoly, AlexanderPolynomial, DualKSchur, FPolynomials, ClusterPolynomials, PostnikovStanley, EquivariantCohomology, GoodSymmetric, SuperSchur}. For Lascoux polynomials, Demazure atoms, and non-symmetric Macdonald polynomials, to our knowledge, nothing was known prior to this work even in special cases. Here we resolve the following case equivalent to Conjecture $3.8$ of \cite{SNPOriginal}:

\begin{theorem}
\label{thm:MacdonaldSNP}
The supports of non-symmetric Macdonald polynomials are M-convex. In particular, they have the saturated Newton polytope property.
\end{theorem}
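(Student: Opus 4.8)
The plan is to induct on the weight $|\mu| = \mu_1 + \cdots + \mu_n$, realizing every $E_\mu$ as built up from $E_{(0,\dots,0)} = 1$ by a sequence of two moves on compositions, each preserving M-convexity of the support: the rotation $\mu \mapsto \pi(\mu)$, handled by the corollary to Proposition \ref{prop:Knop-Sahi}, and the column-shift $\nu_i \mapsto \nu_{i+1}$, handled by combining Corollary \ref{lem: inductive labelings lemma}, Proposition \ref{prop: supp set reflection condition}, and Lemma \ref{lem:StrongMConvexity}. First I would reduce to $\mu \in \mathbb{Z}_{\geq 0}^n$: for general $\mu = (m,\dots,m) + \mu'$ with $\mu' \in \mathbb{Z}_{\geq 0}^n$, the support $\supp(E_\mu)$ is the integer translate $(m,\dots,m) + \supp(E_{\mu'})$, and M-convexity is translation invariant. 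The base case $\mu = (0,\dots,0)$ is immediate, since $E_{(0,\dots,0)} = 1$ has support the single point $\{(0,\dots,0)\}$.

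For the inductive step, fix $\mu \in \mathbb{Z}_{\geq 0}^n$ with $|\mu| = k \geq 1$, assuming the statement for all compositions of weight less than $k$. If $\mu_1 \geq 1$, then $\mu = \pi(\mu')$ for $\mu' := (\mu_2,\dots,\mu_n,\mu_1 - 1) \in \mathbb{Z}_{\geq 0}^n$ of weight $k-1$, so the inductive hypothesis and the corollary to Proposition \ref{prop:Knop-Sahi} give that $\supp(E_\mu)$ is M-convex. If $\mu_1 = 0$, let $\ell \geq 1$ be maximal with $\mu_1 = \cdots = \mu_\ell = 0$, set $a := \mu_{\ell+1} \geq 1$, $\rho := (\mu_{\ell+2},\dots,\mu_n)$, and $m := \ell + 1 \geq 2$. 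With this data, $\mu$ is the last term $\nu_m$ of the family $\nu_1,\dots,\nu_m$ from the Labelings lemmas, every $\nu_i$ has weight $k$, and $\nu_1 = a*0^{\ell}*\rho$ has first coordinate $a \geq 1$; hence $\nu_1 = \pi(\nu_1')$ for $\nu_1' := ((\nu_1)_2,\dots,(\nu_1)_n,a-1) \in \mathbb{Z}_{\geq 0}^n$ of weight $k-1$, and $\supp(E_{\nu_1})$ is M-convex by the inductive hypothesis and the corollary to Proposition \ref{prop:Knop-Sahi}. Now a secondary induction on $i$ from $1$ to $m-1$ propagates M-convexity up the chain: given that $\supp(E_{\nu_i})$ is M-convex, Corollary \ref{lem: inductive labelings lemma} and Proposition \ref{lem: labels mult} give $\supp(E_{\nu_{i+1}}) = \supp(E_{\nu_i}) \cup s_i(\supp(E_{\nu_i}))$, Proposition \ref{prop: supp set reflection condition} gives $\sigma_{i+1,i}(\supp(E_{\nu_i})) \subseteq \supp(E_{\nu_i}) \cup (\supp(E_{\nu_i}) + e_{i+1} - e_i)$ (note $s_i = \sigma_{i,i+1} = \sigma_{i+1,i}$), and Lemma \ref{lem:StrongMConvexity} applied with $L = \supp(E_{\nu_i})$ and the index pair $(i+1,i)$ concludes that $\supp(E_{\nu_{i+1}}) = L \cup \sigma_{i+1,i}L$ is M-convex. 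At $i = m-1$ this gives that $\supp(E_\mu) = \supp(E_{\nu_m})$ is M-convex, completing the induction. The SNP statement is then immediate: an M-convex set equals, by definition, the lattice points of a generalized permutahedron, so it coincides with the lattice points of its own convex hull.

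The bulk of the difficulty has already been discharged in the preceding lemmas — the attacking-chain ``flip'' argument underlying Proposition \ref{prop: supp set reflection condition} and the polyhedral intersection argument underlying Lemma \ref{lem:StrongMConvexity} — so the theorem itself is mostly bookkeeping. The two points I would be most careful about are: (i) checking that every $\mu \in \mathbb{Z}_{\geq 0}^n$ genuinely decomposes through the two moves with strictly decreasing weight, which in the leading-zeros case is why one must pass through $\nu_1$ (whose weight equals that of $\mu$) before applying $\pi$, rather than reducing $\mu$ directly; and (ii) aligning indices so that the hypothesis of Lemma \ref{lem:StrongMConvexity} matches the $e_{i+1} - e_i$ shift produced by Proposition \ref{prop: supp set reflection condition}. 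Degenerate ranges — for instance $\ell + 1 = n$, so $\rho$ is empty, or $n = 1$ (where the leading-zeros case never arises) — should be verified but present no obstacle.
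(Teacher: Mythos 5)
Your proposal is correct and follows essentially the same route as the paper: reduce via Knop--Sahi (Proposition \ref{prop:Knop-Sahi}) when the first entry is positive, and otherwise use Proposition \ref{prop: supp set reflection condition} together with Lemma \ref{lem:StrongMConvexity} to move the leading column, with the same index alignment. The only difference is organizational --- the paper takes a minimal counterexample in (weight, position of first nonzero entry) and shifts the column a single step, while you run an explicit secondary induction along the whole chain $\nu_1,\dots,\nu_m$ --- which carries the same content.
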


The \textbf{non-symmetric Macdonald polynomials} $\{E_{\mu}(x_1,\dots,x_n)| \mu \in \mathbb{Z}^n\}$ form an exceptional basis for the space of Laurent polynomials $\mathbb{Q}(q,t)[x_1^{\pm 1},\dots,x_n^{\pm 1}]$ occurring across modern algebraic combinatorics in the geometry of Hilbert schemes \cite{haiman2000hilbert}, the combinatorics of LLT polynomials \cite{FlaggedLLT, HHLsym, HHLnsym}, and the study of double affine Hecke algebras \cite{C_95}. They appear in statistical mechanics, which provides probabilistic interpretations of their formulas using solvable lattice models \cite{IntegrableVertexModels} or multiline queues \cite{MultilineQueues}. They also relate to other families of polynomials via specialization of coefficients such as key polynomials \cite{keyfromMacdonald}, Demazure atoms \cite{DemazureAtoms}, and affine Demazure characters of type $\mathrm{GL}$ \cite{Kumar02}. By Theorem 8.2.2 of \cite{Kumar02} non-symmetric Macdonald polynomials have the same support as affine Demazure characters of type $\mathrm{GL}$. Relatedly, in Section 1.2 of \cite{AffineDems}, Besson, Jeralds, and Kiers recently asked whether affine Demazure characters have saturated Newton polytopes. This question was also raised earlier at the conclusion of \cite{BessonHong} by Besson and Hong in terms of weight polytopes and toric varieties in affine Schubert varieties, where they had observed computationally this property held in small cases. We resolve the type $\mathrm{GL}$ case here:

\begin{cor}
    For each $\mu \in \mathbb{Z}^n_{\geq 0}$ the type $\mathrm{GL}_n$ affine Demazure character $E_{\mu}(x;q,0)$ has M-convex support for generic $q.$ In particular, they have the saturated Newton polytope property. 
\end{cor}

By Sahi's triangularity condition from \cite{Sahi}, the support of the non-symmetric Macdonald polynomials  $E_{\mu}$ is precisely the lower interval with maximum $\mu$ in the affine type $\mathrm{GL}$ Bruhat order.   Besson and Hong noted in \cite{BessonHong} that the moment polytopes of \textbf{affine Schubert varieties} in affine Grassmannians are precisely the convex hulls of certain lower order ideals in the Bruhat order of the co-weight lattice. Thus, these moment polytopes are the Newton polytopes of non-symmetric Macdonald polynomials. As a consequence of Theorem \ref{thm:MacdonaldSNP}, we find the following.

\begin{cor}
\label{cor: schubert mp}
    For all $\mu \in \mathbb{Z}^n,$ the moment polytope $\mathrm{MP}(\overline{X}_{\mu})$ of the affine Schubert variety $\overline{X}_{\mu} \subset \mathrm{Gr}_n$ is a generalized permutahedron. 
\end{cor}

These polytopes are affine analogs of the Bruhat interval polytopes introduced by Tsukerman and Williams in \cite{BruhatIntervalPolytopesOrig}.  In Proposition 2.7 of \cite{BruhatIntervalPolytopesOrig}, they showed that Bruhat interval polytopes are generalized permutahedra. Corollary \ref{cor: schubert mp} shows this remains true in the affine type $\mathrm{GL}$ setting for lower intervals. 

To prove these results, we give a novel interpretation of the Haglund--Haiman--Loehr (HHL) combinatorial formula for non-symmetric Macdonald polynomials from \cite{HHLnsym} in terms of graph coloring. In particular, given any Dyck path, one can associate a graph to it called its Dyck graph. The chromatic symmetric function of a Dyck graph is a fundamental object in algebraic combinatorics due to its role in the theory of Hessenberg varieties \cite{ChromaticHessenberg} most well known in the context of the Stanley-Stembridge conjecture recently resolved by Hikita \cite{hikitastanleystembridge} with a different proof given shortly after in \cite{macdonaldstanleystembridge}. The latter proof used a connection between Macdonald polynomials and Dyck graphs unearthed and leveraged in the proof of the Shuffle Theorem \cite{ShuffleConjecture} and explored further in \cite{LLTChromatic, ChromaticMacdonald}. In \cite{ChromaticMacdonald}, they derived decompositions for Macdonald polynomials into chromatic quasisymmetric polynomials of certain Dyck graphs and introduced a non-symmetric analog of the chromatic quasisymmetric polynomial to generalize this decomposition to the non-symmetric setting. In that work, they introduced a special family of Dyck graphs called attacking graphs and claimed without proof that attacking graphs are Dyck graphs. 

In Section \ref{sec:HHL}, we describe how to interpret the supports of non-symmetric Macdonald polynomials using the HHL formula, and in Section \ref{sec:chromatic}, we prove that non-symmetric Macdonald polynomials have the same support as a distinct non-symmetric generalization of the chromatic symmetric polynomial of Dyck graphs from that of \cite{ChromaticNonsym} given by only looking at colorings that extend a fixed coloring of a subgraph. Initially, we thought this approach would directly prove saturation, since in \cite[Theorem 4.1]{ChromaticSNP} they showed that chromatic symmetric polynomials of Dyck graphs have saturated Newton polytopes. However, the lack of symmetry obstructs such an extension.

Nonetheless our proof relies on our chromatic reinterpretation of the HHL combinatorial formula for non-symmetric Macdonald polynomials \cite{HHLnsym}, but it is not a purely combinatorial argument as in \cite{ChromaticSNP}. Our argument requires leveraging the algebraic structure of non-symmetric Macdonald polynomials. Namely, we prove our result inductively by noting that the trivial non-symmetric Macdonald polynomial $E_{(0,\dots,0)} = 1$ has M-convex support and any non-symmetric Macdonald polynomial may be built from smaller ones starting with $E_{(0,\dots,0)} = 1$ by applying a sequence of applications of two operations from the Knop-Sahi relations. Using the graph coloring interpretation, we interpret how these symmetries affect the set of colorings in Section \ref{sec:symmetries}. Then we require a novel polyhedral geometry argument in Section \ref{sec:MConvexity} of independent interest to show that both operations preserve M-convexity of the support sets of the polynomials.

In Section \ref{recoveringNonSym}, we discuss our reinterpretation of the HHL formula for non-symmetric Macdonald polynomials in terms of colorings of Dyck graphs and derive an analogous interpretation for modified Macdonald functions. The combinatorial statistics for the HHL formulas naturally extend to colorings of arbitrary Dyck graphs. Thus we are able to define analogous constructions for arbitrary Dyck paths. In the modified case, we observe that the resulting power series are no longer always symmetric but are quasi-symmetric and, in fact, have positive expansions into Gessel's fundamental quasi-symmetric functions.


\subsection{Funding}
The authors have no funding sources to report.

\subsection{Acknowledgments}

The authors would like to thank Sean Griffin, Joseph Pappe, and Germain Poullot for useful discussions during the FPSAC 2025 conference at Hokkaido University. We thank Karola M{\'e}sz{\'a}ros for suggesting checking the (denormalized) Lorentzian property and log-concavity.  

\section{Non-symmetric Macdonald polynomials and their combinatorics}
\label{sec:HHL}
\subsection{Non-symmetric Macdonald polynomials}

The non-symmetric Macdonald polynomials $E_{\mu}$ may be defined in multiple equivalent ways. We refer the reader to \cite{HHLnsym}, \cite{MultilineQueues}, and \cite{C_95} for an overview of these various definitions. We use the HHL combinatorial description of non-symmetric Macdonald polynomials (Theorem 3.5.1 \cite{HHLnsym}) throughout this paper and thus take their combinatorial formula as a definition. However, before we may define the non-symmetric Macdonald polynomials $E_{\mu}(x_1,\dots,x_n;q,t)$, we must first define some required combinatorial objects.

\begin{definition}\cite{HHLnsym} \label{HHL defn}
For a weak composition $\mu = (\mu_1,\dots,\mu_n)$, define the column diagram of $\mu$ as 
$$dg'(\mu):= \{(i,j)\in \mathbb{N}^2 : 1\leq i\leq n, 1\leq j \leq \mu_i \}.$$ This is represented by a collection of boxes in positions given by $dg'(\mu)$. The augmented diagram of $\mu$ is given by 
$$\widehat{dg}(\mu):= dg'(\mu)\cup\{(i,0): 1\leq i\leq n\}.$$
Visually, to get $\widehat{dg}(\mu)$ we are adding a bottom row of boxes on length $n$ below the diagram $dg'(\mu)$. 

A filling of $\mu$ is a function $\sigma: dg'(\mu) \rightarrow \{1,\dots,n\}$ and given a filling there is an associated augmented filling $\widehat{\sigma}: \widehat{dg}(\mu) \rightarrow \{1,\dots,n\}$ extending $\sigma$ with the additional bottom row boxes filled according to $\widehat{\sigma}((j,0)) = j$ for $j = 1,\dots,n$. Distinct lattice squares $u,v \in \mathbb{N}^2$ are said to attack each other if one of the following is true:
\begin{itemize}
\item $u$ and $v$ are in the same row 
\item $u$ and $v$ are in consecutive rows and the box in the lower row is to the right of the box in the upper row.
\end{itemize}
A filling $\sigma: dg'(\mu) \rightarrow \{1,\dots,n\}$ is \textbf{\textit{non-attacking}} if $\widehat{\sigma}(u) \neq \widehat{\sigma}(v)$ for every pair of attacking boxes $u,v \in \widehat{dg}(\mu).$ For a filling $\sigma:\dg'(\mu) \rightarrow \{1,\dots,n\}$ set $$x^{\sigma}:= x_1^{|\sigma^{-1}(1)|}\cdots x_n^{|\sigma^{-1}(n)|}.$$
\end{definition}

\begin{example}

Below is a lattice diagram where we have chosen some box $x$ and labeled all of the boxes with $\ast$'s which either attack $x$ or are attacked by $x:$

\begin{center}
    \begin{ytableau}
        \none & \none    & \none &  \\
           \ast   &  \none   &       &  \\
           \ast   &     x     &   \ast    & \ast \\
              &          &    \ast  & \ast \\
    \end{ytableau}
    \end{center}
    
Any box is only either attacked by or attacks a box in adjacent rows and, in particular, the only boxes of $\dg'(\mu)$ which attack the basement boxes are those in the first row. 
\end{example}

\begin{example}
    Consider $\mu = (1,0,2).$ The diagram $\widehat{\dg}(\mu)$ is given by
    \begin{center}
    \begin{ytableau}
        \none & \none &  \\
         &   \none   &   \\
        1 & 2 & 3 \\
    \end{ytableau}
    \end{center}
    where the boxes labeled $1,2,3$ are the basement of $\widehat{\dg}(\mu)$. By definition, these basement boxes always carry the labels $1,2,3$ in any non-attacking labeling of $\widehat{\dg}(\mu)$. The following are valid non-attacking labelings $\widehat{\sigma}$ of $\widehat{\dg}(\mu)$: 
    \begin{center}
        \begin{ytableau}
        \none & \none & 1 \\
        1 &   \none   & 2  \\
        1 & 2 & 3 \\
    \end{ytableau} ~~~ \hspace{.1in} \begin{ytableau}
        \none & \none & 3 \\
        1 &   \none   & 2  \\
        1 & 2 & 3 \\
    \end{ytableau} ~~~ \hspace{.1in} \begin{ytableau}
        \none & \none & 3 \\
        1 &   \none   & 3  \\
        1 & 2 & 3 \\
    \end{ytableau} .
    \end{center}

    For example, the following labelings are invalid as they all have a pair of attacking boxes with the same label, which we mark with $*$'s:
    \begin{center}
    \begin{ytableau}
        \none & \none & 1 \\
        2* &   \none   & 1  \\
        1 & 2* & 3 \\ 
    \end{ytableau} ~~~\hspace{.1in} \begin{ytableau}
        \none & \none & 2 \\
        1* &   \none   & 1*  \\
        1 & 2 & 3 \\ 
    \end{ytableau} ~~~ \hspace{.1in} \begin{ytableau}
        \none & \none & 2 \\
        3* &   \none   & 1  \\
        1 & 2 & 3* \\ 
    \end{ytableau}.
    \end{center}

\end{example}

 The combinatorial formula for non-symmetric Macdonald polynomials can now be stated. The coefficients in the below formula involve various combinatorial statistics, which we defer defining to Section \ref{recoveringNonSym}. However, as we consider \textbf{generic} $q,t$ the values of these statistics is irrelevant for describing the supports outside of knowing that they are finite non-negative integers.

\begin{definition}\label{def: nsym macd}\cite{HHLnsym}
For $\mu \in \mathbb{Z}_{\geq 0}^{n}$ define the non-symmetric Macdonald polynomial $E_{\mu} \in \mathbb{Q}(q,t)[x_1,\dots,x_n]$ as
    $$E_{\mu} := \sum_{\substack{\sigma: \mu \rightarrow \{1,\dots,n\}\\ \text{non-attacking}}} x^{\sigma}q^{\mathrm{maj}(\widehat{\sigma})}t^{\mathrm{coinv}(\widehat{\sigma})} \prod_{\substack{\square \in \dg'(\mu) \\ \widehat{\sigma}(\square) \neq \widehat{\sigma}(d(\square))}} \left( \frac{1-t}{1-q^{l(\square)+1}t^{a(\square)+1}} \right). $$ 
    Here $\sigma:\mu \rightarrow \{1,\dots,n\}$ is shorthand for diagram labelings of $\dg'(\mu).$ For general $\mu \in \mathbb{Z}^n$, say $\mu = (m,\dots,m) + \mu'$ with $\mu' \in \mathbb{Z}_{\geq 0}^n$ and $m \in \mathbb{Z},$ define $E_{\mu}:= (x_1\cdots x_n)^m E_{\mu'}.$
\end{definition}

\begin{example}
    For $\mu = (0,2,0)$, the non-symmetric Macdonald polynomial $E_{(0,2,0)}(x_1,x_2,x_3;q,t)$ is given by 
    $$E_{(0,2,0)} = x_2^{2} + \frac{1-t}{1-q^2t^2} x_1^2 + \frac{q(1-t)}{1-qt} x_2x_3 + \frac{q(1-t)^2}{(1-qt)(1-q^2t^2)} x_1x_3 + \frac{1-t}{1-qt} x_1x_2 + \frac{q(1-t)^2}{(1-qt)(1-q^2t^2)} x_1x_2.$$ Each of the terms above corresponds to exactly one of the following non-attacking labelings:
    \begin{center}
        \begin{ytableau}
        \none & 2 & \none \\
        \none & 2 & \none  \\
        1 & 2 & 3 \\ 
    \end{ytableau} ~~~~ \hspace{.1in} \begin{ytableau}
        \none & 1 & \none \\
        \none & 1 & \none  \\
        1 & 2 & 3 \\ 
    \end{ytableau}~~~~ \hspace{.1in} \begin{ytableau}
        \none & 3 & \none \\
        \none & 2 & \none  \\
        1 & 2 & 3 \\ 
    \end{ytableau}~~~~ \hspace{.1in} \begin{ytableau}
        \none & 3 & \none \\
        \none & 1 & \none  \\
        1 & 2 & 3 \\ 
    \end{ytableau}~~~~ \hspace{.1in} \begin{ytableau}
        \none & 1 & \none \\
        \none & 2 & \none  \\
        1 & 2 & 3 \\ 
    \end{ytableau}~~~~ \hspace{.1in} \begin{ytableau}
        \none & 2 & \none \\
        \none & 1 & \none  \\
        1 & 2 & 3 \\ 
    \end{ytableau}.
    \end{center}
\end{example}

Since whenever $\mu = (m,\dots,m) + \mu'$ with $\mu' \in \mathbb{Z}_{\geq 0}^n,$ $\mathrm{Newton}(E_{\mu}) = (m,\dots,m) + \mathrm{Newton}(E_{\mu'})$, it suffices for our purposes throughout the remainder of this paper to only consider those $E_{\mu'}$ for $\mu' \in \mathbb{Z}_{\geq 0}^{n}.$ Given a weak composition $\mu = (\mu_1,\dots,\mu_n)$ write $|\mu|:= \mu_1+\dots + \mu_n.$

The following operations are important in this paper.

\begin{definition}
    For $1\leq i \leq n-1$ define $s_i: \mathbb{Z}^n \rightarrow \mathbb{Z}^n$ as the simple transposition $s_i(\dots,a_i,a_{i+1},\dots):= (\dots,a_{i+1},a_{i},\dots).$ Define the map $\pi: \mathbb{Z}^n \rightarrow \mathbb{Z}^n$ as $\pi(a_1,\dots,a_n):= (a_n+1,a_1,\dots,a_{n-1}).$ Define $\psi: \mathbb{Q}(q,t)[x_1^{\pm 1},\dots, x_n^{\pm 1}] \rightarrow \mathbb{Q}(q,t)[x_1^{\pm 1},\dots, x_n^{\pm 1}]$ for the $\mathbb{Q}(q,t)$-algebra automorphism determined by 
    $\psi(x_1^{a_1}\cdots x_n^{a_n}):= q^{-a_n} x_1^{a_n}x_2^{a_1}\cdots x_n^{a_{n-1}}.$
\end{definition}

Notice that the actions of $s_1,\dots,s_{n-1},\pi$ on $\mathbb{Z}^n$ generate the standard action of the extended affine symmetric group $\widehat{\mathfrak{S}}_n$ on $\mathbb{Z}^n.$ Furthermore, $s_1,\dots,s_{n-1},\pi$ restrict to maps $\mathbb{Z}^{n}_{\geq 0} \rightarrow \mathbb{Z}^{n}_{\geq 0}$, and $s_1,\dots,s_{n-1}$ are invertible whereas $\pi \mid_{\mathbb{Z}^{n}_{\geq 0}}$ is now only injective and no longer invertible. We write $e_1,\dots,e_n$ for the standard coordinate basis vectors of $\mathbb{Z}^n.$

The following is one of the Knop-Sahi recurrence relations for the non-symmetric Macdonald polynomials, which we require later:

\begin{prop}\label{prop:Knop-Sahi}\cite{HHLnsym}
    For all $\mu \in \mathbb{Z}^n$, 
    $E_{\pi(\mu)} = q^{\mu_n} x_1\psi (E_{\mu}).$
\end{prop}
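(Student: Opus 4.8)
The plan is to verify the identity directly from the Haglund--Haiman--Loehr combinatorial formula of Definition~\ref{def: nsym macd}. First reduce to $\mu\in\mathbb{Z}_{\ge 0}^{n}$: writing $\mu=(m,\dots,m)+\mu'$ with $\mu'\in\mathbb{Z}_{\ge 0}^{n}$, one has $\pi(\mu)=(m,\dots,m)+\pi(\mu')$, and since $E_{\nu}=(x_1\cdots x_n)^{m}E_{\nu-(m,\dots,m)}$ while $\psi$ is an algebra automorphism with $\psi\big((x_1\cdots x_n)^{m}f\big)=q^{-m}(x_1\cdots x_n)^{m}\psi(f)$, the identity for $\mu$ follows from the identity for $\mu'$. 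So assume $\mu\in\mathbb{Z}_{\ge 0}^{n}$ from now on.

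Next, note the monomial bookkeeping: if $\sigma$ is a filling of $\dg'(\mu)$ with content $\underline c=(c_1,\dots,c_n)$, $c_k=|\sigma^{-1}(k)|$, so that $x^{\sigma}=x_1^{c_1}\cdots x_n^{c_n}$, then a direct computation gives $q^{\mu_n}x_1\psi(x^{\sigma})=q^{\mu_n-c_n}\,x^{\pi(\underline c)}$, where $\pi(\underline c)=(c_n+1,c_1,\dots,c_{n-1})$. Thus it suffices to exhibit a bijection $\sigma\mapsto\tau$ from non-attacking fillings of $\dg'(\mu)$ to non-attacking fillings of $\dg'(\pi\mu)$ with $x^{\tau}=x^{\pi(\underline c)}$ and such that the scalar weight $q^{\mathrm{maj}(\widehat\sigma)}t^{\mathrm{coinv}(\widehat\sigma)}\prod_{\square}\frac{1-t}{1-q^{\mathrm{leg}(\square)+1}t^{\mathrm{arm}(\square)+1}}$ that Definition~\ref{def: nsym macd} attaches to $\sigma$ gets multiplied by exactly $q^{\mu_n-c_n}$ to give the weight of $\tau$. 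The bijection is the cyclic rotation suggested by $\pi$ and $\psi$: send column $i$ of $\dg'(\mu)$ to column $i+1$ of $\dg'(\pi\mu)$ for $1\le i\le n-1$; send column $n$, with one new cell appended at its bottom, to column $1$ of $\dg'(\pi\mu)$ (so that $(n,j)\mapsto(1,j+1)$); relabel every entry by $k\mapsto k+1\pmod n$; and give the new bottom cell the entry $1$. The basement $(1,2,\dots,n)$ is carried to $(2,3,\dots,n,1)$, which is again the standard basement in the rotated column order, so $\tau$ is a genuine filling of $\dg'(\pi\mu)$; and the construction is visibly invertible.

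It remains to check that $\tau$ is non-attacking and that the weight transforms as stated. For the first: cyclically permuting the columns changes which pairs of cells attack, but in a way exactly matched by the cyclic permutation of the basement labels; the relabeling is a bijection on entry values, so coincidences and distinctness of entries transfer; and the one new cell $v$, at the bottom of the leftmost column with entry $1$, attacks only cells in its own row and basement cells to its right, all of which necessarily have entry $\neq1$ (a bottom-row cell outside the last column cannot be labeled $n$, since it attacks the basement cell labeled $n$). For the weight: $x^{\tau}=x^{\pi(\underline c)}$ is immediate; the leg of every cell inherited from $\dg'(\mu)$ is unchanged; one checks that $\mathrm{arm}$ is unchanged on the \emph{distinguished} cells (those $\square$ with $\widehat\sigma(\square)\neq\widehat\sigma(d(\square))$), so the two products agree; $\mathrm{coinv}$ is unchanged; and $\mathrm{maj}(\widehat\tau)=\mathrm{maj}(\widehat\sigma)+\mu_n-c_n$. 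The last equality is a run-by-run tally: grouping the entries equal to $n$ in each column of $\widehat\sigma$ into maximal vertical runs, the relabeling $n\mapsto1$ destroys the descent at the bottom of each run and, for each run not reaching the top of its column, creates a descent in the cell just above it, with legs unchanged; summing the $\mathrm{leg}+1$ contributions run by run, and treating column $n$ (whose basement entry is $n$, anchoring its lowest run) separately, produces precisely $\mu_n-c_n$. Summing over fillings then gives $E_{\pi(\mu)}=q^{\mu_n}x_1\psi(E_{\mu})$ for $\mu\in\mathbb{Z}_{\ge 0}^{n}$, hence for all $\mu\in\mathbb{Z}^{n}$ by the reduction.

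The step I expect to be the main obstacle is the invariance of the distinguished-cell product and of $\mathrm{coinv}$ under the rotation-with-relabeling. The rotation slides columns past one another and raises the first column's height by one, so showing that $\mathrm{arm}$ is unaffected on distinguished cells requires careful bookkeeping against the precise HHL definition of $\mathrm{arm}$; this should work because that definition is tailored so that cells counted ``to the right, in the same row'' before the rotation are exactly the ones counted ``to the left, in the row below'' afterward, and because the cells whose arm could in principle change are precisely those whose entry is pinned down by the non-attacking condition (for instance the bottom cell of the first column, which is never distinguished). One must likewise confirm that the non-order-preserving relabeling $n\mapsto1$ leaves $\mathrm{coinv}$ alone, which comes down to HHL's inversion/coinversion rule depending only on the cyclic order type of the three entries of a triple.
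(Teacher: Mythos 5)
Note first that the paper itself offers no proof of this proposition: it is imported wholesale from \cite{HHLnsym} (the recurrence is originally due to Knop and Sahi, proved by Hecke-algebra/intertwiner methods, and in Haglund--Haiman--Loehr it is checked against their combinatorial formula). So you are supplying a proof where the paper supplies a citation, and your route --- a weight-preserving cyclic-rotation bijection on non-attacking fillings --- is essentially the verification carried out in \cite{HHLnsym} itself. The parts you actually argue are correct: the reduction to $\mu\in\mathbb{Z}_{\ge 0}^{n}$ using $\psi\big((x_1\cdots x_n)^m f\big)=q^{-m}(x_1\cdots x_n)^m\psi(f)$; the bijection (new box at the bottom of column $1$ labelled $1$, entries shifted $k\mapsto k+1 \bmod n$); the transfer of attacking pairs and the check on the new box, hence preservation of the non-attacking condition; the monomial bookkeeping $q^{\mu_n}x_1\psi(x^{\sigma})=q^{\mu_n-c_n}x^{\pi(\underline c)}$; the invariance of legs; and the run-by-run computation $\mathrm{maj}(\widehat\tau)=\mathrm{maj}(\widehat\sigma)+\mu_n-c_n$ (with the basement entry $n$ of column $n$ anchoring its lowest run) all check out.

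The gap is exactly the step you flag yourself: the invariance of $\mathrm{coinv}$ and of the $(\mathrm{leg},\mathrm{arm})$ data on the distinguished cells is asserted (``one checks'', ``should work because'') rather than proved, and this is where all of the $t$-dependence and the denominators $1-q^{\mathrm{leg}+1}t^{\mathrm{arm}+1}$ live; without it you have only matched the $x$-monomials, not the $q,t$-weights. These claims cannot even be tested within this paper, which deliberately never defines $\mathrm{arm}$, $\mathrm{coinv}$, or the triples, so completing the argument requires importing the precise conventions of \cite{HHLnsym} and redoing their bookkeeping. For what it is worth, the claims are true and your heuristic is the right one: with the HHL definition, $\mathrm{arm}$ is preserved on \emph{every} inherited cell, not just the distinguished ones --- the asymmetry between weak and strict height comparisons on the two sides of a cell, and the same-row versus row-below counting, are designed precisely to absorb the $+1$ in the height of the rotated column --- while $\mathrm{coinv}$ is preserved because triple types transform consistently and the relabelling $k\mapsto k+1 \bmod n$ preserves cyclic orientations (your parenthetical about the change being confined to cells ``pinned down by non-attacking'' is not quite the right mechanism: the only cell whose arm data is genuinely new is the added box, which is never distinguished and contributes no factor). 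As written, then, your proposal is a sound outline of the known verification with its decisive steps left unchecked, rather than a complete proof.
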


\subsection{Newton polytopes}

\begin{definition}
    Given a Laurent polynomial $f(x_1,\dots,x_n) \in \mathbb{R}[x_1^{\pm 1},\dots, x_n^{\pm 1}]$ and $\alpha \in \mathbb{Z}^n$, let $\langle x^{\alpha} \rangle f$ denote the coefficient of $x^{\alpha}$ in the monomial expansion of $f.$ The set of all $\alpha \in \mathbb{Z}^n$ such that $\langle x^{\alpha} \rangle f \neq 0$ is called the \textbf{support} set of $f$ and denoted $\mathrm{supp}(f).$ The \textbf{Newton polytope} $\Newt(f)$ is the polytope $\mathrm{conv}( \mathrm{supp}(f)).$ The polynomial $f$ is said to have the \textbf{saturated Newton polytope property} (SNP) if  $\Newt(f) \cap \mathbb{Z}^n = \mathrm{supp}(f).$
\end{definition}

We are interested in studying the Newton polytopes of the non-symmetric Macdonald polynomials $\Newt(E_{\mu}) = \Newt(E_{\mu}(x_1,\dots,x_n;q,t))$ with $q,t$ \textit{generic}. For the remainder of the paper, if we refer to $E_{\mu}$ without specifying $q,t$ explicitly we are taking $q,t$ generic. If the reader wishes to avoid the term generic, then one may replace the condition that $q,t$ be generic with any choice $0< q,t < 1.$ In particular, from the explicit formula in Definition \ref{def: nsym macd} it is clear that $q = t = 1/2$ is always sufficiently generic.

\section{From Non-symmetric Macdonald Polynomials to Graph Coloring}

\label{sec:chromatic}
 
 From the augmented diagram, we can obtain a graph $\Gamma_{\mu}$ with vertex set $\widehat{dg}(\mu)$ that we call the \textbf{attacking graph} of $\mu$. Then two vertices of $\Gamma_{\mu}$ are adjacent if they are attacking one another. For this paper, \textbf{coloring} of a graph $G = (V,E)$ is a function $f: G \rightarrow \mathbb{N}$ such that $f(v)\neq f(w)$ whenever $\{v, w\} \in E.$ As a consequence, we have the following easy lemma:

\begin{lem}
For any weak composition $\mu$, the map $\sigma: \text{dg}'(\mu) \to [n]$ is a non-attacking filling if and only if $\widehat{\sigma}: \widehat{\text{dg}}(\mu) \to [n]$ defined by $\widehat{\sigma}(i,j) = \sigma(i,j)$ for $j \geq 1$ and $\widehat{\sigma}(i,j) = i$ otherwise is a coloring of $\Gamma_{\mu}$.
\end{lem}

\begin{proof}
This is immediate from the definition of a non-attacking filling. 
\end{proof}

These graphs turn out to relate to a fundamental family of graphs from the chromatic symmetric functions literature called Dyck graphs. Recall that a Dyck path is a lattice path from $(0,0)$ to $(n,n)$ that always stays above the line $y = x$. For example, consider the following:
    
    \[\begin{tikzpicture}
    \draw[red, thick, dashed] (0,0) -- (0,1)-- (0,2) -- (0,3) -- (1,3) -- (1,4) -- (2,4) -- (2,5) -- (3,5) -- (4,5) -- (5,5);
    \draw[blue, dotted, thick] (0,0) -- (5,5);
  \foreach \x in {0,1,2,3,4,5} {
    \foreach \y in {0,1,2,3,4,5} {
      \fill (\x, \y) circle (2pt); 
    }
  }
\end{tikzpicture}\]
From a Dyck path $\delta$, we can build a corresponding graph as follows. For a Dyck path from $(0,0)$ to $(n,n)$. The vertices of the \textbf{Dyck graph} $G_{\delta}$ are $V = [n]$, and $i$ and $j$ are adjacent for $1 \leq i < j \leq n$ if $(i-1,j)$ is on or below the Dyck path. Equivalently, label the boxes along the diagonal in order from $1$ to $n$. Then the boxes are the vertices and an earlier box is adjacent to a later box if the vertical line through the center of the earlier box intersects the horizontal line through the center of the later box below the Dyck path. For example, in the Dyck graph from our example, $1$ and $3$ are adjacent, while $1$ and $4$ are not adjacent. 
    \[\begin{tikzpicture}
    \draw[red, thick, dashed] (0,0) -- (0,1)-- (0,2) -- (0,3) -- (1,3) -- (1,4) -- (2,4) -- (2,5) -- (3,5) -- (4,5) -- (5,5);
    \draw (.5,.5) node {$1$};
    \draw (1.5,1.5) node {$2$};
    \draw (2.5,2.5) node {$3$};
    \draw (3.5,3.5) node {$4$};
    \draw (4.5,4.5) node {$5$};
    \draw[blue, dotted, thick] (.5,.7) -- (.5,2.5);
    \draw[blue, dotted, thick] (2.3,2.5) -- (.5,2.5);
  \foreach \x in {0,1,2,3,4,5} {
    \foreach \y in {0,1,2,3,4,5} {
      \fill (\x, \y) circle (2pt); 
    }
  }
\end{tikzpicture} \hspace{.5cm} \vline \hspace{.5cm} \begin{tikzpicture}
    \draw[red, thick, dashed] (0,0) -- (0,1)-- (0,2) -- (0,3) -- (1,3) -- (1,4) -- (2,4) -- (2,5) -- (3,5) -- (4,5) -- (5,5);
    \draw (.5,.5) node {$1$};
    \draw (1.5,1.5) node {$2$};
    \draw (2.5,2.5) node {$3$};
    \draw (3.5,3.5) node {$4$};
    \draw (4.5,4.5) node {$5$};
    \draw[blue, dotted, thick] (.5,.7) -- (.5,3.5);
    \draw[blue, dotted, thick] (3.3,3.5) -- (.5,3.5);
  \foreach \x in {0,1,2,3,4,5} {
    \foreach \y in {0,1,2,3,4,5} {
      \fill (\x, \y) circle (2pt); 
    }
  }
\end{tikzpicture}\]

Our goal is to show that attacking graphs are Dyck graphs. For this, we first need to construct a bijection between weak compositions and a special subset of Dyck paths. We define a map called the \textbf{Dyckification} $\mathcal{F}(\mu)$ of a weak composition $\mu$. 

Let $\mu = (\mu_{1}, \mu_{2},\dots,\mu_{n})$. In order to encode $\mathcal{F}(\mu)$, we use the description of a Dyck path as a sequence of $2|\mu|$ north steps and east steps, where $|\mu| := \sum_{i=1}^{n} \mu_{i}$. For this we define a filling of the augmented diagram $\widehat{dg}(\mu)$ with either $E$ or $EN$. An entry has label $E$ if there is no block above it or $EN$ if there is a block above it. An example filling would be as follows:
\[\begin{ytableau}
E & E & \none \\
EN & EN & E \\
\end{ytableau}\]
Then the Dyck path is the first $n$ North steps followed by the entries of the diagram read in the same order as the vertices. For the example, the resulting Dyck path is the example Dyck path above given by NNNENENEEE. Then $\mathcal{F}(\mu)$ is the Dyckification of $\mu$. 

\begin{lem}
The Dyckification map is a bijection between weak compositions and Dyck paths with no two consecutive north steps after the first east step.
\end{lem}

\begin{proof}
First, we verify that $\mathcal{F}(\mu)$ is a Dyck path. The first condition is that there must be $|\mu|+n$ north steps and $|\mu|+n$ east steps. Note that every block in the diagram has entry $E$ or $EN$, and a block has entry $E$ if and only if it has no block above it, by definition. Hence, each column has precisely one entry with label $E$ and all others $EN$. Thus, in the labeling, there are $|\mu|$ entries with label $EN$ and $n$ with label $E$. The first $n$ steps of the path before using the steps from the labeling all have label $N$. Hence, the path has $|\mu|+n$ east steps and $|\mu|+n$ north steps. 

The second condition is that: For all $j \leq 2(|\mu|+n)$, the first $j$ elements in the words have more north steps than east steps. Suppose this is not the case. Then there exists a minimal $1\leq j \leq 2(|\mu|+n)$ such that the number of north steps in the first $j$ elements is strictly less than the number of east steps in the first $j$ elements. The word starts with $n$ north steps, so $j > n$.

Suppose for the sake of contradiction that $\mathcal{F}(\mu)_{j} = N$. Then the first $j-1$ steps have the same amount of east steps but $1$ fewer north step. Hence, then the first $j-1$ steps would also have more east steps than north steps, a contradiction to the minimality of $j$. Thus, $\mathcal{F}(\mu)_{j} = E$.  
Note also that $j < 2(|\mu|+n)$, since the total sequence has the same number of north steps as east steps. Then prior to $j$ being added, there have been $a$ blocks with $EN$ steps, $b$ blocks with $E$ steps, and $n$ $N$ blocks. Since the total number of $E$ blocks is $n$, and the last block has label $E$, $b \leq n-1$. Hence, the total number of easts in the first $j$ steps is $a + b +1 \leq a + n$, the number of norths in the first $j$ steps, a contradiction to our assumption about $j$. Therefore, $\mathcal{F}(\mu)$ is actually a Dyck path. By construction, after the first $n$ north steps, each step is an E step or an EN step. Either way, there cannot be two north steps in a row. 

We define an inverse map algorithmically. Let $\delta$ be a Dyck path with no two norths in a row after the first $n$ norths. Then after the first $n$ norths the remaining word has no two consecutive norths. The subword following those n norths does not have consecutive norths and starts with an east. Hence, any north must first be preceded by an east and so the word may be written as a string of $E$ and $EN$ steps. 

Initialize $S = [n]$ and $g(\mu) = (0,0,\dots,0) \in \mathbb{Z}^{n}$. While $S \neq \emptyset$, do the following. Consider the next $|S|$ steps of the path: $X_{1}, X_{2},\dots, X_{|S|}$ with $X_{i} \in \{E, EN\}$ for all $i \in [|S|]$. Update $S = \{j \in S: X_{i_{j}} = EN\}$, where $i_{j}$ is defined such that $j$ is the $i_{j}$th largest element of $S$ and add $1$ to $g(\mu)_{i}$ for each $i \in S$. 

To be a Dyck path, each north step must be followed eventually by a corresponding to E step. Hence, as there are $n$ initial north steps, there must be at least $n$ $E$ steps in order to be a Dyck path. A set $S$ of those steps will be of the form $EN$, and so there will be $|S|$ many more $E$ steps after the first $n$ steps. This continues until $S = \emptyset$. Hence, this algorithm does indeed yield a weak composition in finite time.

It remains to prove that $f$ and $g$ are inverses of one another. Let $\mu$ be a weak composition. Then $\mathcal{F}(\mu)$ is a Dyck path with $n$ initial norths. In the run of the algorithm, $S$ is initialized as the basement and updated at each step to become the next row by construction. At each step, the algorithm updates the height of the corresponding columns by $1$ for each time $S$ changes. Hence, in the end, we have $g \circ \mathcal{F}(\mu) = \mu$. A similar argument works for the other direction.
\end{proof}

This bijection induces an isomorphism between the attacking graph and the corresponding Dyck graph. 

\begin{theorem}
\label{thm:Dyckification}
Let $\mu$ be a weak composition. Then $\Gamma_{\mu}$ is isomorphic to $G_{\mathcal{F}(\mu)}$, where $\mathcal{F}$ is the Dyckification map. In particular, every attacking graph is isomorphic to a Dyck graph. 
\end{theorem}

\begin{proof}
By construction, $\mathcal{F}(\mu)$ is a Dyck path from $(0,0)$ to $(|\mu| + n,|\mu|+n)$, so $\Gamma_{\mu}$ and $G_{\mathcal{F}(\mu)}$ have the same number of vertices. For the bijection, we can just use the identity map $\iota: [|\mu|+n] \to [|\mu|+n]$ by $\iota(i) = i$ if we use the labeling of the vertices of $\Gamma_{\mu}$ from bottom-left to top right reading row by row. 
For example, the indexing looks like the following for $\widehat{\text{dg}}(1,1,0)$:
\[\begin{ytableau}
4 & 5 & \none \\
1 & 2 & 3 \\
\end{ytableau}\]
Then all we need to argue is that $\{i,j\}$ is an edge in the attacking graph of $\mu$ if and only if it is an edge in the Dyck graph $\mathcal{F}(\mu)$. 

In the Dyck graph, we defined adjacency between $i$ and $j$ for $i < j$ by $i$ and $j$ are adjacent if $(i-1,j)$ is below the Dyck graph. An equivalent way to detect this property is to look for the highest point at step $i-1$ in the Dyck graph. That is the maximum number of north steps taken after $i-1$ east steps. Then we can say $i$ and $j$ are adjacent if the maximum number of north steps taken for $i-1$ east steps is at least $j$. 

We can construct a labeling $g_{\mu}: [|\mu| + n] \to [|\mu| + n]$ of the diagram by labeling each entry with the height of the Dyck path before following the first $E$ step from that entry. For $\mu = (1,1,0)$, we find the following labeling:
\[\begin{ytableau}
5 & 5 & \none \\
3 & 4 & 5 \\
\end{ytableau}\]
Before the first entry, there were $3$ steps north taken. That is why the first coordinate is $3$. In the second entry the coordinate is $4$, because there was an east north step. At the beginning of each entry of the diagram is an east step, so there are precisely $i-1$ east steps taken before reaching the $i$th entry. Hence, the number of north steps taken, the height of the entry, is precisely the maximum height at $i-1$. By the definition of adjacency for a Dyck graph, we have for all $j > i$ that $j$ is adjacent to $i$ in the Dyck graph for $\mathcal{F}(\mu)$ if and only if $j \leq g_{\mu}(i)$.

We can define another labeling by $h_{\mu}: [|\mu| + n] \to [|\mu|+ n]$ by $h_{\mu}(i)$ is the largest entry in the next row above and to the left of $i$, or if no such entry exists, the maximum of the row containing $i$. By definition of the attacking graph, $i$ and $j$ are adjacent for $i < j$ if and only if $j \leq h_{\mu}(i)$. Therefore, to prove these graphs are isomorphic, it suffices to show that $g_{\mu} = h_{\mu}$.

By definition $g_{\mu}(1) = n$, since there are $n$ north steps prior to the Dyck path beginning. Furthermore, since for the bottom left entry there is nothing above and strictly to the left of it, $h_{\mu}(1)$ is the maximum of the first row, which is $n$. Hence, $g_{\mu}(1) = h_{\mu}(1)$. Then, for the sake of induction, assume that $g_{\mu}(i-1) = h_{\mu}(i-1)$. Note that, by definition,
\[g_{\mu}(i) = \begin{cases} g_{\mu}(i-1) &\text{ if  the }i\text{th entry is E}  \\
g_{\mu}(i-1) + 1 &\text{ Otherwise.}\end{cases}.\]

It suffices to show the same recurrence holds for $h_{\mu}$. We resolve this by splitting into cases. 

Suppose that the $i-1$st entry is $E$. Then there is nothing above the entry at $i-1$. Then if $i-1$ and $i$ are in the same row, there is an entry above and strictly to the left of $i$ if and only if there is an entry above and strictly to the left of $i-1$. Furthermore, the rightmost one of these must be the same, since there is nothing above the $i-1$st entry. Hence, in that case, $g_{\mu}(i) = g_{\mu}(i-1) = h_{\mu}(i-1) = h_{\mu}(i)$. 

Suppose instead that $i-1$ and $i$ are in different rows, and there is nothing above $i-1$, then the $i-1$st entry must be in the rightmost column of its row, which is the row directly below the one containing the $i$th entry. Since $i-1$ has nothing above it, the rightmost entry of the row above it must be strictly to the left. Hence, $h_{\mu}(i-1)$ will be the maximum of that row. Since $i-1$ is in a distinct row from $i$, $i$ is the first entry of its row. It must therefore have no neighbors above and strictly to its left. Thus, $h_{\mu}(i)$ is also the maximum of that row. It follows then that $g_{\mu}(i) = g_{\mu}(i-1) = h_{\mu}(i-1) = h_{\mu}(i)$ as desired.

Suppose instead that $i-1$st entry is $EN$. Equivalently there is an entry directly above $i-1$. Then there are two cases. Suppose that $i-1$ and $i$ are in the same row. Then the entry above $i-1$ is strictly above and to the left of $i$ in the row above it. It is also maximal among all such elements and hence that entry equals $h_{\mu}(i)$. If there is some thing directly above and to the left of $i-1$, then $h_{\mu}(i-1)$ is in the same row as $h_{\mu}(i)$. Furthermore, by maximality, it will be equal to $h_{\mu}(i) - 1$. Therefore, $h_{\mu}(i) = h_{\mu}(i-1) + 1 = g_{\mu}(i-1) + 1 = g_{\mu}(i)$ in that case. 

The only remaining case is when $i-1$ and $i$ are in distinct rows. Then, by the same reasoning as in the previous case with two rows, $h_{\mu}(i)$ is the maximum of its row. Since $i$ is in a different row of $i-1$, $i-1$ must be the maximum of its row meaning that the element above it must also be the maximum of its row. Hence, $h_{\mu}(i)$ must be directly above $i-1$. Then $h_{\mu}(i)-1$ is either strictly to its left in the same row or no such element exists and so it is equal to $i-1$. In either case, $h_{\mu}(i-1) = h_{\mu}(i)-1$, so $h_{\mu}(i) = h_{\mu}(i-1) + 1 = g_{\mu}(i-1) + 1 = g_{\mu}(i)$. Therefore, in all cases $h_{\mu}(i) = g_{\mu}(i)$ meaning that $h_{\mu}(i) = g_{\mu}(i)$ for all $i \leq n$. Thus, $g_{\mu} = h_{\mu}$ meaning that the graphs are isomorphic. 

\end{proof}

In general, Dyck paths are in bijection with functions $f: [n] \to [n]$ such that $f(i+1) \geq \max(i+1, f(i)\}$ for all $i \in [n-1]$. These are called \textbf{Hessenberg functions} due to their role in the theory of Hessenberg varieties. We call the function $h_{\mu}$ from the proof of Theorem \ref{thm:Dyckification} the Hessenberg function of the weak composition $\mu$. As an immediate consequence of the proof of Theorem \ref{thm:Dyckification}, we have the following description of the graph from its Hessenberg function:

\begin{cor}
\label{cor:Hessenberg}
If $\mu$ is a weak composition with length $n$, then $G_{\mathcal{F}(\mu)}$ has vertices $[|\mu| + n]$, and $i, j \in [|\mu| + n]$ for $i \leq j$ are adjacent if and only if $j \leq h_{\mu}(i)$.
\end{cor}

Finally, we can connect back to non-symmetric analogs of chromatic polynomials. Recall that the \textbf{chromatic symmetric polynomial} for a graph $G$ is given by
\[\sum_{f \in C_{n}(G)} x^{f},\]
where $x^f:= \prod_{v \in V} x_{f(v)}$ and $C_{n}(G)$ is the set of $n$-colorings of $G$, i.e., those colorings of $G$ of the form $f:G \rightarrow \{1,\cdots ,n\}$. This polynomial is always symmetric, since one can permute colors and still yield a valid $n$-coloring of a graph. However, if one were to pre-color a set of the vertices, it would no longer be symmetric. In particular, we may define $C_{n}(G, c)$ to be the set of colorings of $G$ extending some coloring $c: V' \to [n]$ of an induced subgraph of $G$. In this case, associate to each coloring $f \in C_{n}(G, c)$ the monomial weight $x^f:= \prod_{v\in V'} x_{f(v)}.$ If the chromatic number is equal to the clique number as is the case for Dyck graphs, then a reasonable method to asymmetrize is to force a coloring of a maximal clique. This is precisely what happens here.

\begin{cor}
\label{cor:ChromInterpretation}
For any weak composition $\mu = (\mu_{1}, \dots, \mu_{n})$, the non-symmetric Macdonald polynomial $E_{\mu}$ has the same support as a non-symmetric chromatic polynomial of the Dyck graph $G_{\mathcal{F}(\mu)}$
\[\sum_{f \in C_{n}(G,c)}x^f.\]
for the coloring $c: \{(0,i): 1 \leq i \leq n\} \to [n]$ defined by $c(0,i) = i$.
\end{cor}

\section{Symmetries from Dyck Graph Colorings}
\label{sec:symmetries}
In order to prove that the non-symmetric Macdonald polynomials have $M$-convex supports, we apply the graph coloring interpretation.

\begin{definition}
    For $\mu \in \mathbb{Z}_{\geq 0}^{n}$ define $\cL(\mu)$ to be the set of all non-attacking labelings of $\mu$, i.e., labelings $\sigma$ of the boxes of $\dg'(\mu)$ such that the augmented filling $\widehat{\sigma}$ is non-attacking. Given any permutation $\gamma \in \mathfrak{S}_n$ and any (possibly attacking) diagram labeling $\sigma:\mu \rightarrow \{1, \dots, n\}$, we denote by $\gamma(\sigma)$ the diagram labeling given by $\gamma(\sigma)(\square):= \gamma( \sigma(\square)).$ For $\mu \in \mathbb{Z}_{\geq 0}^n $, define the multiplicity map $\mult: \cL(\mu) \rightarrow \mathbb{Z}_{\geq 0}^{n}$ as 
     $\mult(\sigma) = (\mult_{1}(\sigma),\dots,\mult_n(\sigma))$ where $\mult_j(\sigma) := \# ( \square \in \dg'(\mu) \mid \sigma(\square) = j ).$
\end{definition}

From Definition \ref{def: nsym macd} we see:
\begin{lem}\label{lem: labels mult}
    For all $\mu \in \mathbb{Z}_{\geq 0}^n$, $\mult(\cL(\mu)) = \supp(E_{\mu}).$
\end{lem}

For the remainder of this section, fix some weak composition $\mu \in \mathbb{Z}_{\geq 0}^n$, $a \geq 1$, and $m \geq 2$. Let $N:= m+n$. We are interested in understanding the relationship between the sets of non-attacking labelings corresponding to the weak compositions $\nu_1,\dots,\nu_m$ defined by  $\nu_i:= 0^{i-1}*a*0^{m-i}*\mu$ for $1\leq i \leq m$, as the left-most nonzero column corresponding to $a \geq 1$ is shifted from left to right. Here the $*$'s denote concatenations of weak compositions. We may identify the boxes of each of the diagrams $\widehat{\dg(\nu_1)},\dots,\widehat{\dg(\nu_m)}$ in the obvious way: identify the basement boxes, identify the boxes corresponding to $\mu$ which do not shift, and identify those boxes in the moving column corresponding to $a.$ As such, any (possibly attacking) labeling of some $\nu_i$ may be thought of as a (possibly attacking) labeling of any $\nu_{j}$ for all $1\leq i,j\leq m.$ 
 
\begin{example}
    For instance, when $\mu = (2,1),$ $a =2,$ and $m = 3$, we are considering the weak compositions $\nu_1 = (2,0,0,2,1), \nu_2 = (0,2,0,2,1)$, and $\nu_3 = (0,0,2,2,1)$ with corresponding diagrams: 

\begin{center}
        \begin{ytableau}
       b & \none & \none & x  & \none \\
       c & \none & \none & y &  z \\
       1 & 2 & 3 & 4 & 5 \\ 
       \end{ytableau} 
       ~~$\Rightarrow  $~~
       \begin{ytableau}
       \none & b & \none & x  & \none \\
       \none & c & \none & y &  z \\
       1 & 2 & 3 & 4 & 5 \\ 
       \end{ytableau} 
       ~~$\Rightarrow $~~
       \begin{ytableau}
       \none & \none & b & x  & \none \\
       \none & \none & c & y &  z \\
       1 & 2 & 3 & 4 & 5 \\ 
       \end{ytableau}
       
\end{center}
\end{example}

For $1\leq i< j \leq n$, we write $s_{i,j}$ for the transposition swapping the $i$ and $j$ entries of vectors in $\mathbb{Z}^n$.
Use $\sqcup$ to denote disjoint unions.

In order to understand the impact of this operation of moving from $\nu_{i}$ to $\nu_{i+1}$ on the monomial, we may first understand how it impacts the corresponding Dyck graph. A standard graph operation is to remove an edge. Given a graph $G = (V,E)$ and an edge $e \in E$, we let $G \setminus e = (V, E \setminus e)$. 

\begin{lem}
\label{lem:GraphSymmetrizing}
Let $h_{\nu_{i}}$ and $h_{\nu_{i+1}}$ be the Hessenberg functions of the Dyck graphs of $\nu_{i}$ and $\nu_{i+1}$. Then 
\[h_{\nu_{i+1}}(j) = \begin{cases} h_{\nu_{i}}(j)-1 &\text{ if } j = i+1 \\ 
h_{\nu_{i}}(j)  &\text{Otherwise}\end{cases} \]
In particular, $G_{f(\nu_{i+1})} = G_{f(\nu_{i})} \setminus (i+1,n+1)$, where $n$ is the length of the basement of $\nu_{i}$.
\end{lem}

\begin{proof}
Tracing the proof of Theorem \ref{thm:Dyckification}, moving a column to the right does not change the reading order or the number of boxes, so the labeling of the vertices is the same. The filling of boxes by choices of $E$ or $EN$ is determined solely by which boxes having a box lying directly above it. When moving the first column from $i$ to $i+1$, the only change is that box $i$ will no longer have a box directly above it and box $i+1$ will now have a box directly above it. That is, the labeling will change from box $i$ having $EN$ to $E$ and box $i+1$ will have an $EN$ instead of an $E$. In particular, this means that for all $j \neq i+1$, the number of $EN$ labeled entries strictly before $j$ stays the same, while for $j = i+1$, it goes down by $1$. Then, by the definition of the Hessenberg function from Theorem \ref{thm:Dyckification}, \[h_{\nu_{i+1}}(j) = \begin{cases} h_{\nu_{i}}(j)-1 &\text{ if } j = i+1 \\ 
h_{\nu_{i}}(j)  &\text{Otherwise}\end{cases}. \]
By Corollary \ref{cor:Hessenberg}, the graph $G_{f(\nu_{i+1})}$ is defined by $a$ and $b$ being adjacent for $a < b$ if and only if $b \leq h_{\nu_{i+1}}(a)$. Thus, there is precisely one edge removed given by $(i+1, h_{\nu_{i}}(i+1))$. By our definition of $\nu_{i}$, there was only one tower appearing before the $i+1$st basement entry, and so there was only one $EN$ step prior to $i+1$. Hence, $h_{\nu_{i}}(i+1) = n+1$ meaning that  $G_{f(\nu_{i+1})} = G_{f(\nu_{i})} \setminus (i+1,n+1)$. 
\end{proof}

Thus moving from $\nu_{i}$ to $\nu_{i+1}$ on the level of the graph just removes a single edge to the basement. Note that Dyck graphs enjoy several rich properties as they are equivalent to unit interval graphs. 

For us, the only property we need to leverage here is that these graphs are claw-free. Namely, they do not contain an induced subgraph isomorphic to $G = (V,E)$ with $V = \{1,2,3,4\}$ and $E = \{\{1,2\}, \{1,3\},\{1,4\}\}$ called the claw. Stanley considered claw-freeness when studying Schur positivity of chromatic symmetric polynomials in \cite{StanleyNiceGraphs}. In particular, in the discussion following Proposition $1.5$ of that paper, he observes that if a symmetric polynomial is Schur positive, then its support must be an order ideal with respect to dominance ordering. He showed in Proposition $1.6$ that the support of a chromatic symmetric polynomial is an order ideal in dominance ordering if the graph is claw-free. This observation was the key tool for the proof of saturation for chromatic symmetric polynomials of Dyck graphs in \cite{ChromaticSNP}. 

Breaking of symmetry for our chromatic interpretation stops us short of being able to apply this dominance ordering observation directly. However, Stanley's argument still buys us a lot. The key observation is that a claw-free bipartite graph must be a disjoint union of paths and cycles. This is because if a vertex had degree at least $3$, then the induced graph on that vertex and its neighborhood must contain a claw. Hence, every vertex would be of degree at most $2$, which implies that every connected component of the graph is a path or a cycle. Since claw-free graphs are closed under induced subgraphs, given a fixed coloring the induced subgraph on any pair of colors is bipartite by construction and so must be a disjoint union of paths and cycles. Then to obtain a new coloring, one can swap the colors on a path or a cycle. We apply this observation here. In what follows, recall that $s_{i}: \mathbb{R}^{n} \to \mathbb{R}^{n}$ denotes the linear map swapping coordinates $i$ and $i+1$. As a standard abuse of notation, for each $i \in [n-1]$, we also let $s_{i}: [n] \to [n]$ defined by $s_{i}(i) = i+1, s_{i}(i+1) = i,$ and $s_{i}(k) = k$ for all $k \notin \{i,i+1\}$, where the meaning is made clear by the input.  

\begin{lem}
\label{lem:swapping}
For $\nu_{i}$ and $\nu_{i+1}$, we have
\begin{itemize}
    \item $\text{mult}(\mathcal{L}(\nu_{i+1})) = \text{mult}(\mathcal{L}(\nu_{i})) \cup s_{i}\text{mult}(\mathcal{L}(\nu_{i+1}))$
    \item $\text{mult}(\mathcal{L}(\nu_{i+1})) \subseteq \text{mult}(\mathcal{L}(\nu_{i})) \cup (\text{mult}(\mathcal{L}(\nu_{i})) + e_{i+1}-e_{i})$.
\end{itemize}
\end{lem}

\begin{proof}
Note that by Corollary \ref{cor:ChromInterpretation}, $\mathcal{L}(\nu_{i+1})$ is the set of $n$-colorings of $G_{f(\nu_{i+1})}$ such that the color of vertex $j$ is $j$ for $j \leq n$. Note that $G_{f(\nu_{i})}$ and $G_{f(\nu_{i+1})}$ have the same vertex set $[|\nu_{i}|+n]$. Let $\rho: [|\nu_{i}|+n] \to [n]$ be a coloring such that $\rho(j) = j$ for all $j \leq n$. Then, since $G_{f(\nu_{i+1})}$ is a subgraph of $G_{f(\nu_{i})}$ by Lemma \ref{lem:GraphSymmetrizing}, if $\rho$ is a coloring of $G_{f(\nu_{i})}$, it is also a coloring of $G_{f(\nu_{i+1})}$. Hence, $\mathcal{L}(\nu_{i}) \subseteq \mathcal{L}(\nu_{i+1})$. For example, the following are both non-attacking fillings:
\[
 \begin{ytableau}
       4 & \none & \none & 1  & \none \\
       1 & \none & \none & 2 &  5 \\
       1 & 2 & 3 & 4 & 5 \\ 
       \end{ytableau} 
       ~~ \Rightarrow  ~~
       \begin{ytableau}
       \none & 4 & \none & 1  & \none \\
       \none & 1 & \none & 2 &  5 \\
       1 & 2 & 3 & 4 & 5 \\ 
       \end{ytableau} \]
In fact, the graph of $G_{f(\nu_{i})}$ only has one additional edge between $n+1$ and the basement element $i+1$. The only colorings of $G_{f(\nu_{i+1})}$ that are not colorings of $G_{f(\nu_{i})}$ are those for which entry $n+1$ and entry $i+1$ have distinct colors. By assumption $i+1 \leq n$, so $\rho$ is a coloring of $G_{f(\nu_{i+1})}$ that is not a coloring of $G_{f(\nu_{i})}$ if and only if entry $n+1$ has color $i+1$. For example, the right is a non-attacking filling, but the left is not:
\[
 \begin{ytableau}
       4 & \none & \none & 1  & \none \\
       \textcolor{red}{\textbf{2}} & \none & \none & 1 &  5 \\
       1 & \textcolor{red}{\textbf{2}} & 3 & 4 & 5 \\ 
       \end{ytableau} 
       ~~ \Leftarrow  ~~
       \begin{ytableau}
       \none & 4 & \none & 1  & \none \\
       \none & 2 & \none & 1 &  5 \\
       1 & 2 & 3 & 4 & 5 \\ 
       \end{ytableau} \]
This stronger observation is what gives rise to the symmetries. First, note that, since permuting colors preserves being a coloring, $s_{i} \circ \rho$ is still a coloring. However, $s_{i} \circ \rho(i) = i+1$ and $s_{i} \circ \rho(i+1) = i$. Define $\widehat{\rho}$ by 
\[\widehat{\rho}(j) = \begin{cases}
    j &\text{if } j \in \{i, i+1\} \\
    s_{i} \circ \rho(j) &\text{Otherwise.}
\end{cases} \]
We claim that if $\rho$ is a coloring of $G_{f(\nu_{i})}$, then $\widehat{\rho}$ is a coloring of $G_{f(\nu_{i+1})}$. Then, since $s_{i} \circ \rho$ is a coloring, the only way for $\widehat{\rho}$ to not be a coloring is for $i$ or $i+1$ to have neighbors of the same color. Note that $h_{\nu_{i+1}}(i) = h_{\nu_{i+1}}(i+1) = n$ by Lemma \ref{lem:GraphSymmetrizing}, so all neighbors of $i$ and $i+1$ have coordinate at most $n$ and thus by construction have distinct colors. Therefore, $\widehat{\rho}$ is still a valid coloring. For example, for $\rho$ on the left, $\widehat{\rho}$ is the coloring on the right.

\[
 \begin{ytableau}
       4 & \none & \none & 1  & \none \\
       1 & \none & \none & 2 &  5 \\
       1 & 2 & 3 & 4 & 5 \\ 
       \end{ytableau} 
       ~~ \Rightarrow  ~~
       \begin{ytableau}
       \none & 4 & \none & \textcolor{blue}{\mathbf{2}}  & \none \\
       \none & \textcolor{blue}{\mathbf{2}} & \none & \textcolor{blue}{\mathbf{1}} &  5 \\
       1 & 2 & 3 & 4 & 5 \\ 
       \end{ytableau} \]

Note that $\widehat{\rho}$ swaps the number of vertices colored with color $i$ and color $i+1$. It follows then that \[\text{mult}(\mathcal{L}(\nu_{i+1})) \supseteq \text{mult}(\mathcal{L}(\nu_{i})) \cup s_{i} \text{mult}(\mathcal{L}(\nu_{i+1})).\]

To complete the proof, it suffices to show that $\text{mult}(\mathcal{L}(\nu_{i+1})) \setminus \text{mult}(\mathcal{L}(\nu_{i})) \subseteq s_{i}\text{mult}(\mathcal{L}(\nu_{i}))$ and $\text{mult}(\mathcal{L}(\nu_{i+1})) \setminus \text{mult}(\mathcal{L}(\nu_{i})) \subseteq \text{mult}(\mathcal{L}(\nu_{i})) + (e_{i+1}-e_{i})$.

Let $\gamma$ be a coloring in $\mathcal{L}(\nu_{i+1}) \setminus \mathcal{L}(\nu_{i})$. Then, from what we have already shown, $\gamma(n+1) = i+1$. There are two ways to construct a coloring in $\mathcal{L}(\nu_{i})$ from $\gamma$.

First, construct $\widehat{\gamma}$ as we did for $\widehat{\rho}$. By the same argument, this remains a coloring. The number of vertices colored by $i$ and $i+1$ swaps, so $\text{mult}(\mathcal{L}(\nu_{i+1})) \setminus \text{mult}(\mathcal{L}(\nu_{i})) \subseteq s_{i}\text{mult}(\mathcal{L}(\nu_{i}))$. For example, for $\gamma$, on the right, we obtain $\widehat{\gamma}$ on the left:
For example, if $\rho$ were the coloring on the right, $\widehat{\rho}$ yields the coloring on the left:
\[
 \begin{ytableau}
       4 & \none & \none & \textcolor{blue}{\textbf{2}}  & \none \\
       \textcolor{blue}{\textbf{1}} & \none & \none & \textcolor{blue}{\textbf{2}} &  5 \\
       1 & 2 & 3 & 4 & 5 \\ 
       \end{ytableau} 
       ~~ \Leftarrow  ~~
       \begin{ytableau}
       \none & 4 & \none & 1  & \none \\
       \none & 2 & \none & 1 &  5 \\
       1 & 2 & 3 & 4 & 5 \\ 
       \end{ytableau} \]

Alternatively, note that the graph is claw-free, so the induced subgraph of $G_{\nu_{i+1}}$ on the pair of colors $i$ and $i+1$ is a disjoint union of paths and cycles. Consider the connected component containing entry $n+1$. Note that, by Lemma \ref{lem:GraphSymmetrizing}, as already used previously, $n+1$ is not adjacent to $i$ or $i+1$. Thus, swapping colors on the path or cycle containing $n+1$ yields another valid coloring of $G_{\nu_{i+1}}$. In the same example of $\gamma$, it yields the coloring:
\[
 \begin{ytableau}
       4 & \none & \none & 1  & \none \\
       \textcolor{blue}{\textbf{1}} & \none & \none & \textcolor{blue}{\textbf{2}} &  5 \\
       1 & 2 & 3 & 4 & 5 \\ 
       \end{ytableau} 
       ~~ \Leftarrow  ~~
       \begin{ytableau}
       \none & 4 & \none & 1  & \none \\
       \none & \textcolor{red}{\textbf{2}} & \none & \textcolor{red}{\textbf{1}} &  5 \\
       1 & 2 & 3 & 4 & 5 \\ 
       \end{ytableau} \] 

Note that, since $n+1$ corresponds to the first box in its row, all of its neighbors must be either in the basement or in the previous row. Hence, the component must be a path starting at $n+1$. Thus, swapping the colors either decreases the number of $i+1$ entries by $1$ and increases the number of $i$ entries by $1$ or remains the same. It follows that the multiplicity is contained in $\text{mult}(\mathcal{L}(\nu_{i})) \cup (\text{mult}(\mathcal{L}(\nu_{i})) + e_{i+1} - e_{i})$.
\end{proof}

Putting together Lemma \ref{lem: labels mult} and Lemma \ref{lem:swapping} yields the following, which is the key insight in order to ensure that M-convexity is preserved.

\begin{cor}\label{cor: supp set reflection condition}
    For all $1\leq i \leq m-1$, as subsets of $\mathbb{Z}^n,$
    $$ s_{i}(\supp(E_{\nu_i})) \subseteq \supp(E_{\nu_i}) \cup ( \supp(E_{\nu_i}) + e_{i+1}-e_{i}).$$
\end{cor}

\section{M-convexity}
\label{sec:MConvexity}
\subsection{A Geometric Lemma}

In this section, we prove a technical geometric lemma about preserving M-convexity required for the proof of Theorem \ref{thm:MacdonaldSNP}. To start, M-convexity has a standard definition in terms of an exchange axiom coming from discrete convex analysis. We rely on the following equivalent definition given in Theorem $1.9$ of \cite{Murota}:

\begin{definition}
    Let $P$ be a polytope. Then $P$ is a \textbf{generalized permutahedron} \cite{GenPermOrig} if for all pairs of adjacent vertices $\mathbf{u}, \mathbf{v}$ of $P$, there exists $i, j \in [n]$ such that $\mathbf{u} - \mathbf{v} \in \text{span}(e_{i} -e_{j})$. Let $S \subseteq \mathbb{Z}^{n}$. Then $S$ is \textbf{M-convex} if $S = P \cap \mathbb{Z}^{n}$ for some generalized permutahedron $P$ with vertices in $\mathbb{Z}^{n}$.
\end{definition}

Recall that $s_{i,j}$ denotes the reflection with respect to the linear hyperplane orthogonal to $e_{i} - e_{j}$ or equivalently swapping the $i$th and $j$th coordinates with $s_{i} = s_{i,i+1}$. Notice that the $M$-convexity property is invariant under $\mathbb{Z}^n$ lattice translations and simple root direction reflections $s_{i}$. Using Proposition \ref{prop:Knop-Sahi}, we obtain the following:

\begin{cor}
\label{cor: pi_support}
     For all $\mu \in \mathbb{Z}^n$, $\supp(E_{\pi(\mu)}) = e_1 + s_{1,2}\cdots s_{n-1,n}(\supp(E_{\mu}))$ and thus $\Newt(E_{\pi(\mu)}) = e_1 + s_{1,2}\dots s_{n-1,n}(\Newt(E_{\mu})).$ In particular, if $\supp(E_{\mu})$ is $M$-convex, then so is $\supp(E_{\pi(\mu)}).$
\end{cor}

The next lemma is motivated by Corollary \ref{cor: supp set reflection condition} and is essential to the proof of the main theorem. 

\begin{lem}
\label{lem:StrongMConvexity}
Let $L \subseteq \mathbb{Z}^{n}$ be an $M$-convex set and let $i,j \in \{1,\dots, n\}$ be distinct. If $s_{i,j}L \subseteq L \cup (L+e_{i} -e_{j})$, then $L \cup s_{i,j}L$ is $M$-convex. 
\end{lem}

\begin{proof}
First, we prove that $\conv(L \cup s_{i,j}L) \cap \mathbb{Z}^n = L \cup s_{i,j}L.$ Suppose not, so that there exists $x \in \conv(L \cup s_{i,j}L)\cap \mathbb{Z}^n \setminus (L \cup s_{i,j} L).$
Recall that, by Corollary 46.2c of \cite{SchrijverEncyclopedia}, a Minkowski sum of M-convex sets is M-convex. Hence, since $L$ is $M$-convex and $\{0, e_{i}-e_{j}\}$ is also $M$-convex,
\[L \cup (L + e_{i} - e_{j}) = L + \{0,e_{i}-e_{j}\}\] is also $M$-convex.

It follows that
\[L \cup (L+e_{i} -e_{j}) = \conv(L \cup (L+e_{i} -e_{j})) \cap \mathbb{Z}^{n} \supseteq \conv(L \cup s_{i,j}L) \cap \mathbb{Z}^{n}.\]
Hence, $x \in L \cup (L+ e_{i} - e_{j})$. By assumption, $x \notin L \cup s_{i,j} L$, so $x \in L+ e_{i} - e_{j}$ meaning that $x = \ell + e_{i} - e_{j}$ for some $\ell \in L$. Furthermore, note that 
\[s_{i,j} x \in s_{i,j} \conv(L \cup s_{i,j} L) \cap \mathbb{Z}^{n} = \conv(L \cup s_{i,j}L) \cap \mathbb{Z}^{n} \subseteq L \cup (L + e_{i} - e_{j}). \]
Since $x \notin L \cup s_{i,j} L$ by assumption, $s_{i,j} x \notin L$, meaning that $s_{i,j}x \in L  + e_{i} - e_{j}$. Hence, there exists $\ell' \in L$ such that 
\[s_{i,j} \ell - (e_{i} - e_{j})= s_{i,j}(\ell + e_{i} - e_{j}) = s_{i,j} x = \ell' + e_{i} - e_{j}.\]
Thus, 
\[s_{i,j} \ell + \ell' = (s_{i,j} x + e_{i}- e_{j}) + (s_{i,j} x - (e_{i} -e_{j})) = 2 s_{i,j}x.  \]
Thus, $s_{i,j} x \in \conv(s_{i,j} \ell, \ell')$, where $\ell' - s_{i,j} \ell$ is parallel to $e_{i} - e_{j}$. 

Two possibilities remain: either $s_{i,j} \ell \in L$ or $s_{i,j} \ell \notin L$. Suppose first that $s_{i,j} \ell \in L$. Then by $M$-convexity, $x \in \conv(s_{i,j} \ell, \ell') \cap \mathbb{Z}^{n} \subseteq \text{conv}(L) \cap \mathbb{Z}^{n} =  L$, a contradiction to our assumption that $x \notin L$. 

Suppose instead that $s_{i,j} \ell \notin L$. Then 
\[s_{i,j} \ell \in s_{i,j}L \setminus L \subseteq (L \cup (L + e_{i} - e_{j})) \setminus L \subseteq L + e_{i} - e_{j}.\]
It follows that $s_{i,j}x = s_{i,j}\ell - (e_{i} -e_{j}) \in L$, a contradiction again since $x \notin s_{i,j}L$.

Therefore, we reach a contradiction to our assumption that there existed $x \in (\conv(L \cup s_{i,j} L) \cap \mathbb{Z}^{n})\setminus (L\cup s_{i,j} L)$ meaning that 
\[L \cup s_{i,j} L = \conv(L \cup s_{i,j} L) \cap \mathbb{Z}^{n}.\]

Thus, to show $L \cup s_{i,j}L $ is M-convex, it suffices to show that $\text{conv}(L \cup s_{i,j} L)$ is a generalized permutahedron with integral vertices. Note that $L \subseteq \mathbb{Z}^{n}$, so $s_{i,j} L \subseteq \mathbb{Z}^{n}$. Thus, $\text{conv}(L \cup s_{i,j}L)$ must also have vertices in $L \cup s_{i,j} L \subseteq \mathbb{Z}^{n}$ meaning that it is integral. It suffices then to show it is a generalized permutahedron. Suppose it is not. Then it has two vertices $\mathbf{u}$ and $\mathbf{v}$ such that the edge from $\mathbf{u}$ to $\mathbf{v}$ is not parallel to $e_{i}-e_{j}$ for any $i,j \in [n]$. In particular, $\mathbf{u}, \mathbf{v} \in L \cup s_{i,j}L$.

Suppose that $\mathbf{u}, \mathbf{v} \in L$. Then, since they are adjacent in $\text{conv}(L \cup s_{i,j} L)$, they also must be adjacent in $L$. Hence, by M-convexity of $L$, the edge between $\mathbf{u}$ and $\mathbf{v}$ is parallel to $e_{i} -e_{j}$ for some $i,j \in [n]$, a contradiction.  Since $L$ is M-convex, $s_{i,j}L$ is also $M$-convex. It follows that supposing that $\mathbf{u}, \mathbf{v} \in s_{i,j} L$ leads to precisely the same contradiction.  If $\mathbf{v} = s_{i,j} \mathbf{u}$, then $\mathbf{v} - \mathbf{u}$ is parallel to the root $e_{i} -e_{j}$. It follows that $\mathbf{v} \neq s_{i,j}\mathbf{u}$, a contradiction once again. Hence, $\mathbf{u}, \mathbf{v}, s_{i,j} \mathbf{u},$ and $s_{i,j}\mathbf{v}$ must all be pairwise distinct. Furthermore, without loss of generality, $\mathbf{u} \in L$ and $\mathbf{v} \in s_{i,j}L \setminus L$. 

Note that, since $\mathbf{u}$ and $\mathbf{v}$ are adjacent vertices of $\text{conv}(L \cup s_{i,j}L)$, $s_{i,j} \mathbf{u}$ and $s_{i,j} \mathbf{v}$ are also adjacent vertices of $\text{conv}(L \cup s_{i,j}L)$. It follows that they cannot both be in $L$. Since $\mathbf{v} \in s_{i,j} L$, and $s_{i,j}$ is an involution, $s_{i,j} \mathbf{v} \in L$ meaning that $s_{i,j} \mathbf{u} \in s_{i,j}L \setminus L$. Hence, $s_{i,j} \mathbf{u} = \mathbf{u}' + e_{i} - e_{j}$ for some $\mathbf{u}' \in L$. 

Observe that, since $s_{i,j}$ is the reflection about the hyperplane orthogonal to $e_{i} - e_{j}$,
$\mathbf{u} - s_{i,j} \mathbf{u} = \alpha(e_{i} -e_{j})$ for some $\alpha \in \mathbb{R}$. Note that $\mathbf{u} \neq s_{i,j} \mathbf{u}$, so $\alpha \neq 0$. Suppose that $\alpha > 0$. Since $s_{i,j} \mathbf{u} = \mathbf{u}' + e_{i} - e_{j}$, and $s_{i,j} \mathbf{u} + \alpha(e_{i} -e_{j}) = \mathbf{u}$, by our assumption that $\alpha > 0$, we must have that $s_{i,j} \mathbf{u}$ lies on the interior of the line segment from $\mathbf{u}'$ to $\mathbf{u}$. However, $s_{i,j} \mathbf{u}$ is a vertex. Hence, $\alpha < 0$. 

By similar reasoning, $\mathbf{v} - s_{i,j} \mathbf{v} = \beta(e_{i} - e_{j})$. Again, $\mathbf{v} \neq s_{i,j} \mathbf{v}$, so $\beta \neq 0$. Since $\mathbf{v} = \mathbf{v}' + e_{i} -e_{j}$ and $s_{i,j} \mathbf{v} = \mathbf{v} - \beta(e_{i} - e_{j})$, if $\beta < 0$, then $\mathbf{v}$ lies on the segment between $\mathbf{v}'$ and $s_{i,j}\mathbf{v}$. Again, this contradicts our assumption $\mathbf{v}$ is a vertex. Hence, $\beta > 0$.

Then, $s_{i,j} \mathbf{u} = \mathbf{u} + |\alpha|(e_{i} -e_{j})$ and $s_{i,j} \mathbf{v} = \mathbf{v} - |\beta|(e_{i} -e_{j})$. It follows that
\begin{align*}
    \frac{|\beta| \mathbf{u} + |\alpha|\mathbf{v}}{|\alpha| + |\beta|} &= \frac{|\beta| \mathbf{u} + |\beta||\alpha|(e_{i}-e_{j}) + |\alpha|\mathbf{v}-|\beta||\alpha|(e_{i}-e_{j})}{|\alpha| + |\beta|}\\
    &= \frac{|\beta|(\mathbf{u} + |\alpha|(e_{i}-e_{j})) + |\alpha|(\mathbf{v}-|\beta|(e_{i}-e_{j}))}{|\alpha| + |\beta|} \\
    &= \frac{|\beta| s_{i,j} \mathbf{u} + |\alpha|s_{i,j}\mathbf{v}}{|\alpha| + |\beta|}.
\end{align*}
Geometrically, this means that the line segments from $\mathbf{u}$ to $\mathbf{v}$ and $s_{i,j} \mathbf{u}$ to $s_{i,j} \mathbf{v}$ intersect on their interiors. However, this contradicts our assumption that $\mathbf{u}$ and $\mathbf{v}$ form an edge. Therefore, $\text{conv}(L \cup s_{i,j}L)$ is a generalized permutahedron, and as a result, $L \cup s_{i,j}L$ is M-convex.

\end{proof}

\subsection{Proof of main theorem}

With all of this in place, we may prove the M-convexity of supports for non-symmetric Macdonald polynomials.

\begin{proof}[Proof of Theorem \ref{thm:MacdonaldSNP}]
Let $E_{\mu}$ be the non-symmetric Macdonald polynomial for the weak composition $\mu = (\mu_{1}, \dots, \mu_{n})$. If $\mu = (0,\dots,0)$, $\Newt(E_{(0,\dots,0)}) = \{ 0\}$ is trivially $M$-convex. Suppose for the sake of contradiction that there exists a weak composition $\mu^{\ast}$ such that $E_{\mu^{\ast}}$ is not $M$-convex. Let $k$ be minimal such that there exists a weak composition $\mu$ such that $|\mu| = k$, and $E_{\mu}$ does not have $M$-convex support. Furthermore, choose some $\mu$ such that $\ell = \min(\{i: \mu_{i} >0\})$ is minimal among all such $\mu$ with $|\mu| = k$ and does not have M-convex support. 

Suppose that $\ell =1$, so $\mu_{1} > 0$. Since $\mu_{1} >0$, $(\mu_{2}, \dots,\mu_{n}, \mu_{1}-1)$ is a valid weak composition. Furthermore, $|(\mu_{2}, \dots, \mu_{n}, \mu_{1}-1)| = k-1$, so $E_{(\mu_{2}, \dots, \mu_{n-1}, \mu_{1}-1)}$ has $M$-convex support. Then, by Corollary \ref{cor: pi_support}, $E_{\mu}$ must also have $M$-convex support, a contradiction. 

Thus, $\ell > 1$. Define $\mu'_{i} = \mu_{i}$ if $i > \ell$, $0$ if $i \neq \ell -1$, and $\mu_{\ell}$ if $i = \ell -1$. By the minimality of $\mu$, $E_{\mu'}$ has M-convex support. However, by Corollary \ref{cor: supp set reflection condition}, we may apply Lemma \ref{lem:StrongMConvexity} to $\supp(E_{\mu'})$ to see that $E_{\mu}$ must also have M-convex support, yielding a contradiction. Therefore, for all weak compositions $\mu$, $E_{\mu}$ must have $M$-convex support. Lastly, by the definition of M-convexity, this means that for all weak compositions $\mu$, the support of $E_{\mu}$ is the set of integer points in its convex hull. Therefore, $E_{\mu}$ is saturated. 
\end{proof}

Note that, based on the work in \cite{LogConcaveManyPoly}, one may naturally ask whether the non-symmetric Macdonald polynomials may be (denormalized) Lorentzian for every choice of $q$ and $t$ for $0 < q,t < 1$. Via direct computation in SageMath \cite{sage}, we checked and found that 
\begin{align*}
    E_{[0,1,3]}(x_{0},x_{1},x_{2}) &= (296/651)x_0^3x_1 + (88/217)x_0^2x_1^2 + (8/21)x_0 x_1^3 + (8/21)x_0^3x_2 + (36/31)x_0^2x_1x_2 \\
    &+ (740/651)x_0x_1^2x_2 + (4/7)x_1^3x_2 + (4/7)x_0^2x_2^2 + (986/651)x_0x_1x_2^2 \\
    &+ (6/7)x_1^2x_2^2 + (2/3)x_0x_2^3 + x_1x_2^3    
\end{align*}
 is neither Lorentzian nor denormalized Lorentzian for $q = t = 1/2$. Namely, applying $\partial_{x_{1}} \partial_{x_{2}}$ to $E_{[0,1,3]}(x_{1},x_{2},x_{3})$ or its normalization yields a quadratic form with Gram matrix that has more than one positive eigenvalues. Without normalizing, it is also not log-concave along root directions. Namely, the sequence $(296/651, 88/217, 8/21)$ of coefficients of $x_{0}^{3}x_{1}, x_{0}^{2}x_{1}^{2},$ and $x_{0}x_{1}^{3}$ is not log-concave. However, we did computations for several examples in Sage for $q=t=1/2$, and the normalizations do satisfy log-concavity of coefficients along root directions for all cases we tested. We leave deciding whether this is the case an open problem. It remains possible that these polynomials are Lorentzian or denormalized Lorentzian for some choices of $q$ and $t$. In particular, Demazure characters are conjecturally denormalized Lorentzian \cite{LogConcaveManyPoly}, and they are a limiting case of the non-symmetric Macdonald polynomials for $q, t \to 0$.

\subsection{Conjecture of Monical, Tokcan, and Yong}

In this section, we show that Theorem \ref{thm:MacdonaldSNP} resolves Conjecture 3.8 \cite{SNPOriginal} of Monical, Tokcan, and Yong in the affirmative. 

\begin{definition}\label{def: bruhat}
    Define the Bruhat order $\leq_{S}$ on $\mathbb{Z}^n$ as the transitive closure of the following relations. Given $\lambda \in \mathbb{Z}^n$ and $1\leq i< j \leq n$ with $\lambda_{i} < \lambda_{j}$, we set $\lambda > s_{ij}(\lambda)$, and if in addition $\lambda_{j} - \lambda_i > 1$, then $ s_{ij}(\lambda) >_{S} \lambda +e_i -e_j.$
\end{definition}
 
Let $\widehat{\mathcal{P}}_{\alpha} = \text{conv}(\{\beta: \beta \leq_{S} \alpha\})$. Monical, Tokcan, and Yong posed the following conjecture:

\begin{conj}[Conjecture 3.8 in \cite{SNPOriginal}]
\label{conj: snp conj}
If $\beta \in \widehat{\mathcal{P}_{\alpha}}$ and $\beta \in \mathbb{Z}^{n}_{\geq 0}$, then $\beta \leq_{S} \alpha$. 
\end{conj}

The Bruhat order relates to non-symmetric Macdonald polynomials via the following triangularity result of Sahi:

\begin{prop}\label{prop:bruhat interval}\cite{Sahi}
    For all $\mu \in \mathbb{Z}^n,$ the non-symmetric Macdonald polynomial $E_{\mu}$ has a triangular expansion of the form $E_{\mu} = \sum_{\lambda \leq \mu} a_{\lambda}(q,t) x^{\lambda},$ with all $a_{\lambda}(q,t) \neq 0$ non-zero $q,t$ rational functions. Therefore, $\{\lambda \mid \lambda \leq \mu \} = \supp(E_{\mu})$, and so the Newton polytope of $E_{\mu}$ is the convex hull of the Bruhat lower order ideal of $\mu$:  $\Newt(E_{\mu}) =  \mathrm{conv}(\lambda \mid \lambda \leq \mu).$
\end{prop}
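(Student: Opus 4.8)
The heart of the statement is the first sentence; the two displayed equalities are formal consequences, so the plan is to secure the triangular expansion and then read off the rest. That $E_{\mu} = \sum_{\lambda \le \mu} a_{\lambda}(q,t)\,x^{\lambda}$ with each $a_{\lambda}(q,t)$ a \emph{nonzero} rational function is exactly Sahi's triangularity theorem for nonsymmetric Macdonald polynomials, so the first step is simply to cite it from \cite{Sahi} (it is also recorded in \cite{HHLnsym}). Once it is in hand, $\supp(E_{\mu}) = \{\lambda : \langle x^{\lambda}\rangle E_{\mu} \neq 0\} = \{\lambda : \lambda \le \mu\}$ follows immediately from the definition of support together with the nonvanishing of the $a_{\lambda}$, and $\Newt(E_{\mu}) = \conv(\supp E_{\mu}) = \conv(\lambda : \lambda \le \mu)$ holds by definition of the Newton polytope. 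So the only thing that really needs justification is that the support is \emph{exactly} the Bruhat lower order ideal of $\mu$, not merely contained in it.

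If one wanted a self-contained argument in place of the citation, the natural route is induction on $|\mu|$, with base case $E_{(0,\dots,0)} = 1 = x^{(0,\dots,0)}$, whose support is $\{(0,\dots,0)\}$, the lower ideal of $(0,\dots,0)$. For the $\pi$-step, Proposition \ref{prop:Knop-Sahi} and the explicit action of $\psi$ give $x_1\psi(x^{\lambda}) = q^{-\lambda_n}x^{\pi(\lambda)}$, hence $E_{\pi(\mu)} = \sum_{\lambda \le \mu} q^{\mu_n - \lambda_n} a_{\lambda}(q,t)\,x^{\pi(\lambda)}$; the coefficients are only rescaled, so they remain nonzero, and $\supp(E_{\pi(\mu)}) = \pi(\supp E_{\mu})$. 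Since $\pi$ is a length-zero element of $\widehat{\mathfrak{S}}_n$, it is an automorphism of the Bruhat order of Definition \ref{def: bruhat}, so it carries the lower ideal of $\mu$ onto the lower ideal of $\pi(\mu)$, and the property propagates. For the $s_i$-step (applied when $\mu_i < \mu_{i+1}$, in which case $s_i\mu < \mu$), one uses a recurrence $E_{s_i\mu} = \Theta_i E_{\mu}$ with $\Theta_i$ a $\mathbb{Q}(q,t)$-linear combination of the identity and the Demazure--Lusztig operator $T_i$; since $T_i$ sends $x^{\lambda}$ to a combination of the monomials $x^{\nu}$ for $\nu$ in the string between $\lambda$ and $\sigma_{i,i+1}\lambda$, the inductive hypothesis $\supp E_{\mu} = \{\lambda \le \mu\}$ feeds into $\supp E_{s_i\mu} = \{\nu \le s_i\mu\}$. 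A parallel purely combinatorial route is also available: by Proposition \ref{lem: labels mult}, $\supp(E_{\mu}) = \mult(\cL(\mu))$, so it would suffice to prove $\mult(\cL(\mu)) = \{\lambda : \lambda \le \mu\}$ directly from the Haglund--Haiman--Loehr formula.

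In either approach the main obstacle is the same: the full-support claim, i.e.\ that every $a_{\lambda}(q,t)$ with $\lambda \le \mu$ is genuinely nonzero, rather than the much softer fact that $E_{\mu} = x^{\mu} + (\text{strictly lower order terms})$. Over $\mathbb{Q}(q,t)$ this is what makes the argument go through without pain --- the coefficients produced by the recurrences are ratios of nonzero products of factors $1 - q^{a}t^{b}$ with monomials in $q$, none of which vanish identically, so there is no cancellation among the $a_{\lambda}$ and in particular none of them is killed; but organizing this uniformly over all $\mu$, together with checking that $\pi$ and the operators $\Theta_i$ send Bruhat lower ideals to Bruhat lower ideals (for $\Theta_i$ this forces exactly the right cancellations to move the "top" from $x^{\mu}$ down to $x^{s_i\mu}$), is the real content. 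With the triangular expansion established, the two displayed equalities require no further work.
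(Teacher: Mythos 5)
Your proposal matches the paper exactly: the paper gives no independent proof of this proposition but simply cites Sahi's triangularity theorem for the expansion $E_{\mu} = \sum_{\lambda \leq \mu} a_{\lambda}(q,t)\,x^{\lambda}$ with all $a_{\lambda}(q,t)$ nonzero, and the statements about $\supp(E_{\mu})$ and $\Newt(E_{\mu})$ then follow immediately from the definitions, just as you say. Your optional inductive sketch is extra and, as you yourself note, its hard step (full support, i.e.\ no cancellation under the $\Theta_i$ recurrences) is precisely the content of the cited result, so deferring to \cite{Sahi} is the right call.
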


Theorem \ref{thm:MacdonaldSNP} implies that Conjecture \ref{conj: snp conj} is true.

\begin{cor}\label{cor: proof of conj}
    Conjecture 3.8 \cite{SNPOriginal} is true.
\end{cor}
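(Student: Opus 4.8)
The plan is to observe that Corollary~\ref{cor: proof of conj} is now a purely formal consequence of Theorem~\ref{thm:MacdonaldSNP} together with Sahi's triangularity result, Proposition~\ref{prop:bruhat interval}. The conjecture asserts that if $\beta \in \conv(\lambda \mid \lambda \leq \mu) \cap \mathbb{Z}_{\geq 0}^n$ then $\beta \leq \mu$; so the first step is simply to translate both sides into the language of supports and Newton polytopes. By Proposition~\ref{prop:bruhat interval}, the Bruhat lower order ideal $\{\lambda \mid \lambda \leq \mu\}$ equals $\supp(E_{\mu})$, and hence $\conv(\lambda \mid \lambda \leq \mu) = \Newt(E_{\mu})$.

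Next I would invoke Theorem~\ref{thm:MacdonaldSNP}: the support of $E_{\mu}$ is M-convex, and in particular $E_{\mu}$ is SNP, meaning $\Newt(E_{\mu}) \cap \mathbb{Z}^n = \supp(E_{\mu})$. Combining this with the identifications from the previous step: if $\beta \in \conv(\lambda \mid \lambda \leq \mu) \cap \mathbb{Z}_{\geq 0}^n$, then $\beta \in \Newt(E_{\mu}) \cap \mathbb{Z}^n = \supp(E_{\mu}) = \{\lambda \mid \lambda \leq \mu\}$, so $\beta \leq \mu$, which is exactly the assertion of Conjecture~3.8. One small point to address is that the conjecture intersects with $\mathbb{Z}_{\geq 0}^n$ rather than $\mathbb{Z}^n$, but since $\mu \in \mathbb{Z}_{\geq 0}^n$ and all $\lambda \leq \mu$ lie in $\mathbb{Z}_{\geq 0}^n$ (the Bruhat moves $\sigma_{ij}$ and $+e_i - e_j$ applied within the order ideal of a nonnegative composition stay nonnegative, or one simply notes $\supp(E_\mu) \subseteq \mathbb{Z}_{\geq 0}^n$ directly from Definition~\ref{def: nsym macd}), the convex hull lies in $\mathbb{R}_{\geq 0}^n$ and the distinction is immaterial.

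There is essentially no obstacle here: the entire content of the corollary has been front-loaded into Theorem~\ref{thm:MacdonaldSNP}, and the only work is bookkeeping to confirm that the combinatorial statement of the conjecture coincides with the SNP property of $E_\mu$ under Sahi's dictionary. If anything, the one thing worth stating carefully is that Proposition~\ref{prop:bruhat interval} requires $q,t$ generic (so that no $a_\lambda(q,t)$ vanishes), which is precisely the setting in which Theorem~\ref{thm:MacdonaldSNP} was proved, so the two results are compatible. I would therefore write the proof as a two or three sentence chain of equalities and inclusions, citing Proposition~\ref{prop:bruhat interval} and Theorem~\ref{thm:MacdonaldSNP}, and conclude.
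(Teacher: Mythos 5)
Your proposal is correct and follows essentially the same route as the paper: translate the Bruhat order ideal into $\supp(E_{\mu})$ via Proposition~\ref{prop:bruhat interval}, then apply the SNP property from Theorem~\ref{thm:MacdonaldSNP} to conclude $\beta \leq \mu$. The extra remarks about $\mathbb{Z}_{\geq 0}^n$ versus $\mathbb{Z}^n$ and genericity of $q,t$ are harmless bookkeeping that the paper handles implicitly.
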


\begin{proof}[Proof of Corollary \ref{cor: proof of conj}]
Sahi showed in \cite{Sahi} that $\text{supp}(E_{\alpha}) = \{\beta: \beta \leq_{S} \alpha\}$, so by Theorem \ref{thm:MacdonaldSNP},
\[\{\beta: \beta \leq_{S} \alpha\} = \text{supp}(E_{\mu}) = \text{conv}(\text{supp}(E_{\mu})) \cap \mathbb{Z}^{n}_{\geq0} = \text{conv}(\{\beta: \beta \leq_{S} \alpha\}) \cap \mathbb{Z}^{n}_{\geq 0} = \widehat{\mathcal{P}_{\alpha}} \cap \mathbb{Z}^{n}_{\geq 0}. \]
In particular, the second equality is Theorem \ref{thm:MacdonaldSNP} and the rest is immediate by definition. 
\end{proof}


\subsection{Moment polytopes}

In this section, we relate the $M$-convexity of the support sets of non-symmetric Macdonald polynomials to the moment polytopes of certain affine Schubert varieties. 

Here, we give a brief overview of the construction of affine Grassmannians. We refer the reader to \cite{BessonHong} for a more complete introduction to affine Grassmannians and, specifically, to Section 4.4 for their discussion regarding the moment polytopes of affine Schubert varieties.

\begin{definition}
    For a commutative ring $R$, let $\mathrm{GL}_n(R)$ denote the group of invertible $n\times n$ matrices with coefficients in $R.$ Let $B_n^{+}(R)$ denote the subgroup of $\mathrm{GL}_n(R)$ consisting of upper triangular matrices. Let $\mathbb{C}[[t]]$ be the ring of complex $1$-variable formal power series with field of fractions $\mathbb{C}((t)),$ the field of complex $1$-variable formal Laurent series. The affine Grassmannian is the space of cosets $\mathrm{Gr}_n:= \mathrm{GL}_n(\mathbb{C}((t)))/\mathrm{GL}_n(\mathbb{C}[[t]]).$ Define $\rho:\mathrm{GL}_n(\mathbb{C}[[t]]) \rightarrow \mathrm{GL}_n(\mathbb{C})$ to be the group homomorphism determined by $t \mapsto 0.$ The Iwahori subgroup $\mathrm{I}_n$ is defined to be the pre-image of $B_n^{+}(\mathbb{C})$ with respect to $\rho$, i.e., $\mathrm{I}_n:= \rho^{-1}(B_n^{+}(\mathbb{C})).$ For $\mu \in \mathbb{Z}^n$, define the point $L_{\mu} \in \mathrm{Gr}_n$ as $L_{\mu}:= t^{\mu}\mathrm{GL}_n(\mathbb{C}[[t]])/\mathrm{GL}_n(\mathbb{C}[[t]]).$ The open Schubert variety $X_{\mu}$ is defined to be the Iwahori subgroup orbit of the point $L_{\mu}$, that is to say, $X_{\mu}:= \mathrm{I}_n .L_{\mu} \subset \mathrm{Gr}_n.$ The affine Schubert variety $\overline{X_{\mu}}$ is defined as the closure of $X_{\mu}$ in $\mathrm{Gr}_n.$
 \end{definition}

Using the Iwahori decomposition of $\mathrm{GL}_n(\mathbb{C}((t)))$, we know that the points $\{ L_{\mu} \mid \mu \in \mathbb{Z}^n \}$ are exactly the $(\mathbb{C}^*)^n$-fixed points of the affine Grassmannian $\mathrm{Gr}_n.$ Let $T:= (\mathbb{C}^*)^n$ denote this (small) torus. In fact, each open Schubert variety $X_{\mu}$ contains exactly one $T$-fixed point, namely $L_{\mu}.$

Importantly, the partial order $\leq$ on $\mathbb{Z}^n$ (see Definition \ref{def: bruhat}) is the same as the (parabolic) Bruhat order on the set of minimal length coset representatives $\widehat{\mathfrak{S}}_n/\mathfrak{S}_n \equiv \mathbb{Z}^n$ (see \cite{HHLnsym}). Analogous to the usual Schubert cell decomposition for finite dimensional Grassmannian Schubert varieties, the affine Schubert varieties have the following property.

\begin{lem}[Lemma 2.22 \cite{BessonHong}]
    For all $\mu \in \mathbb{Z}^n$, 
    $\overline{X_{\mu}} = \bigsqcup_{\lambda \leq \mu} X_{\lambda}$ where $\lambda \leq \mu$ denotes the Bruhat order on $\mathbb{Z}^n.$
\end{lem}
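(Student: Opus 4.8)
The plan is to prove this as the affine, parabolic analogue of the classical Bruhat cell decomposition for finite-type Grassmannians. The group $\mathrm{GL}_n(\mathbb{C}((t)))$ carries an ind-group structure, $\mathrm{Gr}_n$ an ind-variety structure on which the Iwahori group $\mathrm{I}_n$ acts, and the key input is the Iwahori--Bruhat decomposition $\mathrm{GL}_n(\mathbb{C}((t))) = \bigsqcup_{w \in \widehat{\mathfrak{S}}_n} \mathrm{I}_n \dot w \mathrm{I}_n$, where $\dot w$ is a lift of $w$. First I would record the parabolic version: quotienting by $\mathrm{GL}_n(\mathbb{C}[[t]])$ and using the identification of $\mathbb{Z}^n$ with the set of minimal-length representatives of $\widehat{\mathfrak{S}}_n/\mathfrak{S}_n$, one obtains $\mathrm{Gr}_n = \bigsqcup_{\mu \in \mathbb{Z}^n} X_\mu$ with each $X_\mu = \mathrm{I}_n L_\mu$ a locally closed subvariety isomorphic to an affine space $\mathbb{A}^{\ell(\mu)}$, where $\ell(\mu)$ is the length of the minimal representative; this uses that a finite-dimensional quotient of the pro-unipotent radical of $\mathrm{I}_n$ acts simply transitively on $X_\mu$. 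In particular this decomposition is disjoint, so the disjointness asserted in the Lemma is automatic.

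The heart of the argument is the closure relation: $X_\lambda \subseteq \overline{X_\mu}$ if and only if $\lambda \le \mu$ in the order of Definition \ref{def: bruhat}. For the ``$\Leftarrow$'' direction it suffices, by transitivity of closure, to treat a single Bruhat cover $\lambda \lessdot \mu$. Choosing an affine simple reflection $s$ realizing this cover and letting $\mathcal P_s = \mathrm{I}_n \sqcup \mathrm{I}_n \dot s\, \mathrm{I}_n$ be the corresponding minimal parahoric, the orbit $\mathcal P_s L_\nu$ is the image of a $\mathbb P^1$, i.e.\ the closure of an $\mathbb A^1$-orbit, and hence contains the single extra $T$-fixed point; translating by $\mathrm{I}_n$ yields $X_\lambda \subseteq \overline{X_\mu}$. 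For the ``$\Rightarrow$'' direction, observe that $\overline{X_\mu}$ is $\mathrm{I}_n$-stable, hence a union of cells $X_\lambda$; if $X_\lambda \subseteq \overline{X_\mu}$, then $\overline{X_\lambda} \subseteq \overline{X_\mu}$, and an induction on $\ell(\mu) - \ell(\lambda)$, using that the boundary $\overline{X_\mu} \setminus X_\mu$ is a union of cells of strictly smaller dimension, forces $\lambda \le \mu$. At this point one invokes Deodhar's lemma that the parabolic Bruhat order on $\widehat{\mathfrak{S}}_n/\mathfrak{S}_n$ is induced from that on $\widehat{\mathfrak{S}}_n$, which is what makes the cell combinatorics match Definition \ref{def: bruhat}.

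Finally, closedness of the union: since Bruhat intervals are finite, $\bigsqcup_{\lambda \le \mu} X_\lambda$ is a finite union of locally closed cells, and the ``$\Leftarrow$'' direction gives $\overline{X_\lambda} \subseteq \bigsqcup_{\nu \le \lambda} X_\nu \subseteq \bigsqcup_{\nu \le \mu} X_\nu$ for each $\lambda \le \mu$, so the union is closed; as it contains $X_\mu$ it contains $\overline{X_\mu}$, and the reverse inclusion is exactly the ``$\Rightarrow$'' direction. I expect the main obstacle to be the bookkeeping in the ``$\Rightarrow$'' direction: making the ind-scheme structure precise, establishing simple transitivity of the pro-unipotent action on each cell and the attendant dimension count driving the induction, and correctly passing between the parabolic and affine Bruhat orders. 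The rank-one $\mathbb P^1$-argument for ``$\Leftarrow$'' and the finiteness/closedness step are comparatively routine. Since this is Lemma 2.22 of \cite{BessonHong}, in the present paper one may alternatively simply cite it, but the above is how I would reconstruct the proof.
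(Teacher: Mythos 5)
The paper offers no proof of this statement at all: it is imported verbatim as Lemma 2.22 of \cite{BessonHong} and used as a black box, so there is nothing in-paper to compare your argument against. Your outline is the standard route from the theory of affine flag varieties (Iwahori--Bruhat decomposition of $\mathrm{GL}_n(\mathbb{C}((t)))$, cells $X_\mu$ isomorphic to affine spaces indexed by minimal-length representatives of $\widehat{\mathfrak{S}}_n/\mathfrak{S}_n$, closure relations via minimal parahorics, Deodhar's comparison of the parabolic and full Bruhat orders), and in outline that is how the result is established in the literature. Two routine caveats: $\widehat{\mathfrak{S}}_n$ is the \emph{extended} affine symmetric group, not a Coxeter group, so the decomposition and the order have to be run on its Coxeter part twisted by the length-zero cyclic subgroup (which permutes the connected components of $\mathrm{Gr}_n$); and the cover relations of Definition \ref{def: bruhat} are the translation to $\mathbb{Z}^n$ of the parabolic order, so the Deodhar step you mention is genuinely needed, not optional.

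There is, however, a real gap in your ``$\Rightarrow$'' direction as stated. Knowing that $\overline{X_\mu}$ is $\mathrm{I}_n$-stable, hence a union of cells, and that the boundary $\overline{X_\mu}\setminus X_\mu$ consists of cells of strictly smaller dimension only yields $\ell(\lambda)<\ell(\mu)$ for a boundary cell $X_\lambda$; dimension comparison never forces Bruhat comparability (already in finite flag varieties there are cells of smaller dimension not contained in a given Schubert variety), so an induction on $\ell(\mu)-\ell(\lambda)$ with this input alone cannot conclude $\lambda\le\mu$. The standard argument proves the containment $\overline{X_\mu}\subseteq\bigsqcup_{\lambda\le\mu}X_\lambda$ directly, by induction on $\ell(\mu)$: choose an affine simple reflection $s$ with $s\mu\lessdot\mu$, use properness of the map $\mathcal{P}_s\times^{\mathrm{I}_n}\overline{X_{s\mu}}\to\mathrm{Gr}_n$ to get $\overline{X_\mu}\subseteq\mathcal{P}_s\,\overline{X_{s\mu}}$, and then the multiplication rule $\mathcal{P}_s\,\mathrm{I}_n\dot{\nu}\,\mathrm{I}_n\subseteq \mathrm{I}_n\dot{\nu}\,\mathrm{I}_n\cup \mathrm{I}_n\dot{s}\dot{\nu}\,\mathrm{I}_n$ together with the lifting (Z-)property of the Bruhat order ($\nu\le s\mu$ implies $\nu,\,s\nu\le\mu$) to see the right-hand side lies in $\bigsqcup_{\lambda\le\mu}X_\lambda$; this is the substantive mechanism, not bookkeeping. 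Your ``$\Leftarrow$'' step via the $\mathbb{P}^1$-orbit of a minimal parahoric and the finiteness/closedness of the union are fine as sketched, and of course for the purposes of this paper simply citing \cite{BessonHong}, as the authors do, is sufficient.
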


The moment polytope $\mathrm{MP}(\overline{X}_{\mu})$ of $\overline{X}_{\mu}$ is thus $\mathrm{conv}(\lambda \mid \lambda \leq \mu).$ By Proposition \ref{prop:bruhat interval}, it follows that $\mathrm{MP}(\overline{X}_{\mu}) = \Newt(E_{\mu})$ for generic $q,t.$ From Theorem \ref{thm:MacdonaldSNP}, we know that for generic $q,t$ the support set $\supp(E_{\mu})$ is $M$-convex. Since $\conv(\supp(E_{\mu})) = \Newt(E_{\mu}) = \mathrm{MP}(\overline{X}_{\mu})$, we find by the definition of M-convexity that Corollary \ref{cor: schubert mp} must hold.

\section{Recovering Non-symmetric Macdonald Polynomials from the Dyck Graph}\label{recoveringNonSym}

While we only apply the graph isomorphism $\mathcal{F}$ to understand the supports of non-symmetric Macdonald polynomials, we can rewrite the HHL formula purely in the language of Dyck graphs and colorings. Note this is distinct from the work of Haglund and Wilson in \cite{ChromaticMacdonald}, where they decomposed the various Macdonald polynomials into chromatic quasisymmetric functions and their variants. We are not offering a decomposition but instead a chromatic reinterpretation of the HHL combinatorial formula. Surprisingly, the statistics we define for the family of Dyck paths associated to an attacking graph could be computed for any Dyck graph. Thus, our results suggest an avenue to extend combinatorial aspects of variants of Macdonald polynomials to any Dyck graph. 

In order to describe our Dyck path interpretation of the non-symmetric Macdonald polynomials, we finally define the combinatorial statistics seen in Definition \ref{def: nsym macd}.

\begin{definition}
    Let $\mu$ be a weak composition. Given $u = (i,j) \in dg'(\mu)$ define the following: 
\begin{itemize}
    \item $\mathrm{leg}(u) := \{(i,j') \in dg'(\mu): j' > j\}$
    \item $\mathrm{arm}^{\text{left}}(u) := \{(i',j) \in dg'(\mu): i'<i, \mu_{i'} \leq \mu_i\} $
    \item $\mathrm{arm}^{\text{right}}(u):= \{(i',j-1) \in \widehat{dg}(\mu): i'>i, \mu_{i'}<\mu_{i}\}$
    \item $\mathrm{arm}(u) := \mathrm{arm}^{\text{left}}(u) \cup \mathrm{arm}^{\text{right}}(u)$
    \item $l(u):= |\mathrm{leg}(u)| = \mu_i -j$
    \item $a(u) := |\mathrm{arm}(u)|.$
\end{itemize}

Let $d(u) = (i,j-1)$ denote the box just below $u$. Given a filling $\sigma:dg'(\mu)\rightarrow \{1,\dots,n\}$, a descent of $\sigma$ is a box $u \in dg'(\mu)$ such that $\widehat{\sigma}(u) > \widehat{\sigma}(d(u)).$
Set $\mathrm{Des}(\widehat{\sigma})$ to be the set of descents of $\widehat{\sigma}$. The reading order on the diagram $\widehat{dg}(\mu)$ is the total ordering on the boxes of $\widehat{dg}(\mu)$ row by row, from top to bottom, and from right to left within each row. If $\sigma: dg'(\mu) \rightarrow \{1,\dots,n\}$ is a filling, an inversion of $\widehat{\sigma}$ is a pair of attacking boxes $u,v \in \widehat{dg}(\mu)$ such that $u < v$ in reading order and $\widehat{\sigma}(u) > \widehat{\sigma}(v).$ Set $\mathrm{Inv}(\widehat{\sigma})$ to be the set of inversions of $\widehat{\sigma}$. 

Define the statistics
    \begin{itemize}
        \item $\mathrm{maj}(\widehat{\sigma}):= \sum_{u \in \mathrm{Des}(\widehat{\sigma})}(l(u) +1)$
        \item $\mathrm{inv}(\widehat{\sigma}):= |\mathrm{Inv}(\widehat{\sigma})|
        -|\{i<j|  \mu_i\leq \mu_j\}| - \sum_{u \in \mathrm{Des}(\widehat{\sigma})} a(u)$
        \item $\mathrm{coinv}(\widehat{\sigma}):= \left(\sum_{u \in dg'(\mu)} a(u) \right) - \mathrm{inv}(\widehat{\sigma}).$
    \end{itemize}
\end{definition}

For example, consider the following non-attacking filling $\widehat{\sigma}$:
\[\begin{ytableau}
        \none & 2 & \none \\
        \none & 1 & 3 \\
        1 & 2 & 3 \\ \end{ytableau}\]
The monomial associated to it $x_{1}x_{2}x_{3}$, since there is $1$ of each label above the basement. We read the entries from left to right and bottom to top labeling them $1,2,3,4,5,6$. To compute each of the statistics, we start with $\text{Des}(\widehat{\sigma})$. Note that $6$ is the unique descent as its label $2$ is greater than the label $1$ of the box below it. Then $\text{maj}(\widehat{\sigma})$ is $l(6) + 1 = |\text{leg}(6)|+1$. By definition, leg is the set of boxes above $6$ in the same column. There is no such box, hence, $l(6) = 0$ and $\text{maj}(\widehat{\sigma}) = 1$. The arm of an element is its set of neighbors with smaller index. Furthermore, $\text{arm}(6) = \text{arm}^{\text{left}}(6) \sqcup \text{arm}^{\text{right}}(6)$. Left arm is the set of neighbors in the same row to the left in a weakly shorter column and so $\text{arm}^{\text{left}}(6) =\emptyset$. Right arm is the set of neighbors in the lower row to the right in a strictly shorter column, so $\text{arm}^{\text{right}}(6) = 1$. Hence, $a(6) = |\text{arm}(6)| = 1$. More generally, $a(4) = 1$, and $a(5) = 0$. Then 
\[\sum_{u \in \text{dg'}(\mu)} a(u) = a(4)  + a(6) =  2 \text{ and } \sum_{u \in \text{Des}(\widehat{\sigma})} a(u) = a(6) = 1.\]
Note that $|\{i < j: \mu_{i} \leq \mu_{j}\}|$ is the set of pairs of columns such that the earlier column is shorter than the latter. In this case, $\mu = (0,2,1)$, so 
\[|\{i < j: \mu_{i} \leq \mu_{j}\}| = 2.\]
The set of inversions is the set of pairs of adjacent elements in the filling of the augmented diagram in the correct order for our reading order, so $\{(1,2), (1,3), (2,3), (4,5)\}$ are the inversions meaning that $|\text{Inv}(\widehat{\sigma})| = 4$. It follows that
\[\text{inv}(\widehat{\sigma}) = 4 - 2 - 1 = 1,\]
and $\text{coinv}(\widehat{\sigma}) = 2 - 1 = 0$. Note that only boxes $4$ and $6$ are not equal to the boxes below them. Box $4$ has leg $1$, so putting all of this together yields that coefficient of the monomial is 
\[q^{\mathrm{maj}(\widehat{\sigma})}t^{\mathrm{coinv}(\widehat{\sigma})} \prod_{\substack{\square \in \dg'(\mu) \\ \widehat{\sigma}(\square) \neq \widehat{\sigma}(d(\square))}} \left( \frac{1-t}{1-q^{l(\square)+1}t^{a(\square)+1}} \right) = qt\left( \frac{1-t}{1-q^{2}t^{2}} \right)\left(\frac{1-t}{1-qt^{2}}\right).\]

Now we describe combinatorial statistics for Dyck graphs which generalize the above statistics for non-symmetric Macdonald polynomials with respect to the bijection $\mathcal{F}.$

Let $\delta$ be a Dyck path with $k$ initial north steps. We define a pre-coloring $c_{\delta}:\{1,\dots,k\} \rightarrow \{1,\dots,k\}$ of the vertices $\{1,\dots, k\}$ of the Dyck graph $G_{\delta}$ via $c_{\delta}(i):=i.$ 
    Define the column graph of $\delta,$ $C_{\delta}$, to be the directed graph on the vertex set $\{1,\dots,n\}$ whose directed edges $j \rightarrow i$ are those pairs of $i<j$ such that $i$ is the largest number such that $\{i,j\}$ does not form an edge in $G_{\delta}.$ We write $d(j):= i$ if $j \rightarrow i$ form an edge of $C_{\delta}$. An equivalent way to find this is to start at $(j,j)$ and move to the left until hitting the Dyck path at $(i,j)$. Then if $i > 0$, $d(j) = i$. Otherwise, $j$ has no outgoing neighbor.

    The leg set of $k+1 \leq i \leq n$, $\mathrm{leg}(i)$, is the set of all $i< j \leq n$ such that these exists a path from $j$ to $i$ in the column graph $C_{\delta}.$ Write $l(i):= |\mathrm{leg}(i)|$ for the size of the leg set of $i.$ The vertices $\{1,\dots , k\}$ are exactly the set of sinks for the column graph $C_{\delta}$ so we may partition the vertices $\{1,\dots ,k\}$ into the components of the graph $C_{\delta}$ to obtain corresponding sets $C_1,\dots,C_k$ which we call the columns of $\delta.$ Define the column height function $\mathrm{ht}: \{1,\cdots, n\}\rightarrow \mathbb{N}$ by $\mathrm{ht}(i):= |C_j|$ if $i \in C_j.$ Let $b_{\delta}$ denote the bounce path of $\delta$ formed by starting at the bottom left vertex of the Dyck path $\delta$ and successively moving north and east maximally while remaining between the path $\delta$ and the main diagonal. The bounce path $b_{\delta}$ partitions the set $\{1,\dots,n\}$ into subsets $R_1,\dots,R_r$ where each $R_i$ is a maximal clique of the Dyck graph of $b_{\delta}.$ We call the sets $R_i$ the rows of $\delta.$ 
    
    For $1\leq i\leq n$ define the arm set, $\mathrm{arm}(i)$, to consist of all $1 \leq j < i$ such that either 
    \begin{itemize}
        \item $i$ and $j$ are in the same row of $G_{\delta}$ and $\mathrm{ht}(j) \leq \mathrm{ht}(i)$
        \item $i$ and $j$ are connected in $G_{\delta}$ but not in the same row and $\mathrm{ht}(j) < \mathrm{ht}(i).$
    \end{itemize}
    
    We write $a(i):= |\mathrm{arm}(i)|.$ Suppose $f: \{1,\dots,n\} \rightarrow \mathbb{N}$ is a labeling of the vertices of the Dyck graph of $\delta$. We say that $1 \leq j \leq n$ is a descent of $f$ if $f(j) > f(d(j)).$ Write $\mathrm{Des}(f)$ for the set of descents of $f.$ An inversion of $f$ is a pair $i<j$ with $\{i,j\}$ an edge in $G_{\delta}$ such that $f(i) < f(j)$. We write $\mathrm{Inv}(f)$ for the set of all inversion pairs of $f.$ Define the statistics
    \begin{itemize}
        \item $\mathrm{maj}(f):= \sum_{i \in \mathrm{Des}(f)}(l(i) +1)$
        \item $\mathrm{inv}(f):= |\mathrm{Inv}(f)|
        -|\{i<j|  \mathrm{ht}(i)\leq \mathrm{ht}(j)\}| - \sum_{i \in \mathrm{Des}(f)} a(i)$
        \item $\mathrm{inv}^*(f):= |\mathrm{Inv}(f)|
         - \sum_{i \in \mathrm{Des}(f)} a(i)$
        \item $\mathrm{coinv}(f):= \left(\sum_{i \geq k+1} a(i) \right) - \mathrm{inv}(f).$
    \end{itemize}

\begin{definition}
    Let $\delta$ be a Dyck path with $k$ initial north steps. Define the Dyck path Macdonald polynomial of $\delta$, $\Psi_{\delta} = \Psi_{\delta}(x_1,\dots,x_k;q,t)\in \mathbb{Q}(q,t)[x_1,\dots,x_k]$, as
    $$\Psi_{\delta}:= \sum_{f\in C_{k}(G_{\delta},c_{\delta})} x^{f} q^{\mathrm{maj}(f)}t^{\mathrm{coinv(f)}} \prod_{\substack{i\geq k+1\\ f(i) \neq f(d(i))}} \frac{1-t}{1-q^{l(i)+1}t^{a(i)+1}}.$$
\end{definition}

The Dyck path Macdonald polynomials generalize the non-symmetric Macdonald polynomials.

\begin{proposition}
    For any weak composition $\mu$, $E_{\mu} = \Psi_{\mathcal{F}(\mu)}.$
\end{proposition}
\begin{proof}
   This follows from aligning the combinatorics of the HHL formula for non-symmetric Macdonald polynomials with the Dyck path combinatorics described above. This is straightforward to verify using the Dyckification bijection in Theorem \ref{thm:Dyckification}.
\end{proof}

Using the HHL combinatorial formula for the non-symmetric Macdonald $E_{0,2,1}(x_1,x_2,x_3;q,t)$, we saw that the above non-attacking filling contributes the term 
$x_1x_2x_3 qt \frac{1-t}{1-q^2t^2} \frac{1-t}{1-qt^2}.$
Under the Dyckification map, this non-attacking filling yields the following Dyck path and coloring on the left with the unique descent entry bolded and column graph on the right:
\[   
    \begin{tikzpicture}
    \draw[red, thick, dashed] (0,0) -- (0,1)-- (0,2) -- (0,3) -- (1,3) -- (2,3) -- (2,4) -- (3,4) -- (3,5) -- (4,5) -- (4,6) -- (5,6) -- (6,6);
    \draw (.5,.5) node {$1$};
    \draw (1.5,1.5) node {$2$};
    \draw (2.5,2.5) node {$3$};
    \draw (3.5,3.5) node {$1$};
    \draw (4.5,4.5) node {$3$};
    \draw (5.5,5.5) node {$\mathbf{2}$};
    
  \foreach \x in {0,1,2,3,4,5,6} {
    \foreach \y in {0,1,2,3,4,5,6} {
      \fill (\x, \y) circle (2pt); 
    }
  }
\end{tikzpicture}
 \hspace{1 in} 
\begin{tikzpicture}
    \draw[red, thick, dashed] (0,0) -- (0,1)-- (0,2) -- (0,3) -- (1,3) -- (2,3) -- (2,4) -- (3,4) -- (3,5) -- (4,5) -- (4,6) -- (5,6) -- (6,6);
    \draw (.5,.5) node {$1$};
    \draw (1.5,1.5) node {$2$};
    \draw (2.5,2.5) node {$3$};
    \draw (3.5,3.5) node {$4$};
    \draw (4.5,4.5) node {$5$};
    \draw (5.5,5.5) node {$6$};
    \draw[blue, thick, ->] (3.3,3.5) .. controls (1.5,3.5) .. (1.5,1.7);
    \draw[blue, thick, ->] (4.3,4.5) .. controls (2.5,4.5) .. (2.5,2.7);
    \draw[blue, thick, ->] (5.3,5.5) .. controls (3.5,5.5) .. (3.5,3.7);
  \foreach \x in {0,1,2,3,4,5,6} {
    \foreach \y in {0,1,2,3,4,5,6} {
      \fill (\x, \y) circle (2pt); 
    }
  }
\end{tikzpicture}
\]
Below on the left is the Dyck path with the unique non-trivial inversion, and on the right the bounce path of the Dyck path is depicted.
\[ \begin{tikzpicture}
    \draw[red, thick, dashed] (0,0) -- (0,1)-- (0,2) -- (0,3) -- (1,3) -- (2,3) -- (2,4) -- (3,4) -- (3,5) -- (4,5) -- (4,6) -- (5,6) -- (6,6);
    \draw (.5,.5) node {$1$};
    \draw (1.5,1.5) node {$2$};
    \draw (2.5,2.5) node {$3$};
    \draw (3.5,3.5) node {$1$};
    \draw (4.5,4.5) node {$3$};
    \draw (5.5,5.5) node {$2$};
    
    \draw[blue, dotted, thick] (3.5,3.7) -- (3.5,4.5);
    \draw[blue, dotted, thick] (3.5,4.5) -- (4.3, 4.5);
  \foreach \x in {0,1,2,3,4,5,6} {
    \foreach \y in {0,1,2,3,4,5,6} {
      \fill (\x, \y) circle (2pt); 
    }
  }
\end{tikzpicture} 
 \hspace{1 in}
 \begin{tikzpicture}
    \draw (.5,.5) node {$1$};
    \draw (1.5,1.5) node {$2$};
    \draw (2.5,2.5) node {$3$};
    \draw (3.5,3.5) node {$4$};
    \draw (4.5,4.5) node {$5$};
    \draw (5.5,5.5) node {$6$};

    \draw[blue, thick, dotted] (0,0) -- (0,3) -- (3,3) -- (3,5) -- (5,5)--(5,6) -- (6,6);
    
  \foreach \x in {0,1,2,3,4,5,6} {
    \foreach \y in {0,1,2,3,4,5,6} {
      \fill (\x, \y) circle (2pt); 
    }
  }
\end{tikzpicture}\]

As such the row sets of $\mathcal{F}(0,2,1)$ are $R_1 = \{1,2,3\}, R_2 = \{4,5\}$ and $R_3 = \{6\}$ whereas the column sets are $C_1 = \{1\}, C_2=\{2,4,6\},$ and $C_3=\{3,5\}.$ Note that these are the row and column sets of the augmented diagram of $(0,2,1)$. The arm and leg sets are as follows: $\mathrm{arm}(4) = \{3\}, \mathrm{arm}(5) = \emptyset, \mathrm{arm}(6) = \{5\}, \mathrm{leg}(4) = \{6\},\mathrm{leg}(5) = \emptyset,$ and $\mathrm{leg}(6) = \emptyset.$ The inversion set of $f$ is $\mathrm{Inv}(f) = \{ (1,2),(1,3),(2,3),(4,5)\}$ and the descent set is $\mathrm{Des}(f) = \{6\}.$ Thus this filling contributes the term $x_1x_2x_3 qt \frac{1-t}{1-q^2t^2} \frac{1-t}{1-qt^2}$ to the polynomial $\Psi_{NNNEENENENEE}(x_1,x_2,x_2;q,t)$ as expected. 

In this work, our main application for the interpretation in terms of Dyck graph colorings for non-symmetric Macdonald polynomials is to prove saturation. However, this framework extends beyond that result. Modified Macdonald functions appeared recently in connection to chromatic symmetric functions \cite{macdonaldstanleystembridge, ShuffleConjecture, ChromaticMacdonald}, but this connection remains mysterious. In pursuit of better understanding this connection, we also provide a reinterpretation of the HHL formula for modified Macdonald functions from Theorem 5.1.1 in \cite{HHLnsym} using (non-proper) colorings of Dyck graphs. 

\begin{definition}
    Let $\delta$ be a Dyck path. Define the modified Dyck path Macdonald function of $\delta$, $\Phi_{\delta} = \Phi_{\delta}(x_1,x_2,\dots)$, by 
    $$\Phi_{\delta}:= \sum_{f: G_{\delta} \rightarrow \mathbb{N}} q^{\mathrm{maj}(f)}t^{\mathrm{inv}^*(f)}x^f .$$
\end{definition}

For any weak composition $\mu$ we define $\mathrm{sort}(\mu)$ to be the partition whose parts consist of the nonzero elements of $\mu$ in weakly decreasing order. For the composition $\mu \in \mathbb{Z}_{\geq 0}^{n}$ with corresponding Dyck path $\mathcal{F}(\mu)$ we define $\mathcal{F}(\mu)'$ to be the Dyck path formed by considering the induced subgraph of $G_{\mathcal{F}(\mu)}$ corresponding to the vertices $\{n+1,\dots,n+|\mu|\}$ which we re-label as $\{1,\dots, |\mu|\}$. This has the effect of forming a new Dyck path which corresponds to diagram of $\mu$ where the basement of the augmented diagram has been removed. The following proposition is straightforward to verify.

\begin{proposition}
    For any weak composition $\mu$, $\widetilde{H}_{\mathrm{sort}(\mu)} = \Phi_{\mathcal{F}(\mu)'}.$
\end{proposition}

A similar approach also gives an interpretation for symmetric Macdonald polynomials by only allowing proper colorings and permuted-basement Macdonald polynomials by taking the non-symmetric case and allowing any fixed proper coloring of vertices under the first bounce in the bounce path. 

Note that, as defined, we could apply the definition to any Dyck graph and arrive at a generalization of modified Macdonald functions for any Dyck graph. It turns out that, in general, the power series $\Phi_{\delta}$ is not always a symmetric function like in the modified Macdonald function case, but it is always quasi-symmetric.

\begin{definition}
    A formal power series $f(x_1,x_2,x_3,\dots)$ is \textbf{quasi-symmetric} if the coefficient of any monomial $x_1^{\alpha_1}\cdots x_{k}^{\alpha_k}$ in $f$ is the same as the coefficient of $x_{i_1}^{\alpha_1}\cdots x_{i_k}^{\alpha_k}$ in $f$ for all $i_1< \dots < i_k.$ For any weak composition $\alpha = (\alpha_1,\dots,\alpha_k)$ let $M_{\alpha}:= \sum_{i_1< \dots < i_k} x_{i_1}^{\alpha_1}\cdots x_{i_k}^{\alpha_k}$ denote the \textbf{monomial quasi-symmetric function}. Define the \textbf{Gessel fundamental quasi-symmetric functions}, $Q_{n,D}$, for $D \subseteq 2^{[n-1]}$ by  
    $$ Q_{n,D} = \sum_{\substack{i_1\leq \dots \leq i_n \\ i_{j} = i_{j+1} \Rightarrow j \in D}} x_{i_1}\cdots x_{i_n}.$$ 
\end{definition}

\begin{proposition}\label{prop:quasisym}
    For all Dyck paths $\delta,$ $\Phi_{\delta}(x_1,x_2,x_3,\dots)$ is a quasi-symmetric function.
\end{proposition}
\begin{proof}
    For any fixed coloring of the Dyck graph, changing the labels of the colors without changing the relative ordering yields another valid coloring. Furthermore, $\text{maj}$ and $\text{inv}^{\ast}$ are invariant under such a change. 
\end{proof}

It is clear from Proposition \ref{prop:quasisym} that for all Dyck paths $\delta$ of length $n$, $\Phi_{\delta} \in \mathbb{Z}_{\geq 0}[q,t^{\pm 1}]\{ M_{\alpha}\}_{\alpha \models n}.$ Upon specializing $q=t=1$, we immediately see that $\Phi_{\delta}|_{q=t=1} = (x_1+x_2+\dots)^n.$ More interestingly, by setting $q=0$ we obtain
$$\Phi_{\delta}|_{q=0} = \sum_{\substack{f: G_{\delta} \rightarrow \mathbb{N}\\ \mathrm{Des}(f) = \emptyset}}t^{|\mathrm{Inv}(f)|}x^f.$$ The condition $\mathrm{Des}(f) = \emptyset$ is equivalent to the statement that whenever $j \rightarrow i$ in the column graph $C_{\delta}$ we have that $f(j) \leq f(i).$ If we further specialize $t= 0$, we obtain $\Phi_{\delta}|_{q=t=0}=h_n = \sum_{i_1\leq \dots \leq i_n} x_{i_1}\cdots x_{i_{n}}$ and if we specialize instead $t \mapsto 1$ we get 
$$\Phi_{\delta}|_{q=0,t=1} = \sum_{\substack{f: G_{\delta} \rightarrow \mathbb{N}\\ \mathrm{Des}(f) = \emptyset}}x^f.$$ This latter specialization is an example of a \textbf{P-partition generating function}, namely it is the generating function corresponding to the poset on $\{1,\dots,n\}$ generated by the cover relation $i \prec j$ if $i = d(j)$ with the usual vertex ordering. 

Consider next the following example of a Dyck graph. This is the minimal example that is not in the image of the Dyckification map of weak compositions:
\[ \begin{tikzpicture}
    \draw[red, thick, dashed] (0,0) -- (0,1)-- (1,1) -- (1,2) -- (1,3) -- (2,3) -- (3,3);
    \draw (.5,.5) node {$1$};
    \draw (1.5,1.5) node {$2$};
    \draw (2.5,2.5) node {$3$};
    
  \foreach \x in {0,1,2,3} {
    \foreach \y in {0,1,2,3} {
      \fill (\x, \y) circle (2pt); 
    }
  }
\end{tikzpicture}\]
Now this Dyck graph has no non-attacking colorings in the non-symmetric model as the number of colors allowed is precisely the number of initial north steps of the Dyck graph. In this case, there is precisely one north step, but the graph is not 1-colorable meaning there is no proper coloring. However, in the modified model, the number of colors is unbounded, but quasi-symmetry implies that considering at most three colors is sufficient. We find that $\Phi_{\delta}(x_1,x_2,x_3;q,t)$ is given by
$$ (x_1^3+x_2^3+x_3^3) + (1+2q)(x_1^2x_2 + x_1^2x_3+x_2^2x_3) + (1+t+q^2t^{-1})(x_1x_2^2+x_1x_3^2+x_2x_3^2) + (1+2q+t+q^2+q^2t^{-1})x_1x_2x_3.$$ Therefore, the full series is given by 
$$\Phi_{\delta} = M_3 + (1+2q)M_{2,1}+(1+t+q^2t^{-1})M_{1,2} + (1+2q+t+q^2+q^2t^{-1})M_{1,1,1}.$$ This shows that $\Phi_{\delta} = Q_{3,\emptyset} + 2qQ_{3,\{2\}}+(t+q^2t^{-1})Q_{3,\{1\}}+q^2Q_{3,\{1,2\}}$. Thus we find that $\Phi_{\delta} \in \mathbb{Z}_{>0}[q,t^{\pm 1}]\{ Q_{3,D}\}_{ D \in 2^{ \{1,2\}}}$. Furthermore, each $Q_{3,D}$ coefficient in $\Phi_{\delta}$ is nonzero. This is true in general.

\begin{proposition}
    For all Dyck paths $\delta$ of length $n,$  $\Phi_{\delta} \in \mathbb{Z}_{>0}[q,t^{\pm 1}]\{ Q_{n,D}\}_{ D \in 2^{[n-1]}}.$
\end{proposition}
\begin{proof}
    This follows from a standardization argument very similar to the derivation of the Gessel expansions of the modified Macdonald functions in \cite{HHLsym}. Let $f: G_{\delta} \rightarrow \mathbb{N}$. We may consider $f$ as a filling of the diagonal boxes of the Dyck path $\delta.$
    
    That is, recall that we denote the vertices of the Dyck graph by $[n]$, so $f: [n] \to [n]$. We define the standardization, $w_{f} \in \mathfrak{S}_n$ by constructing a total ordering $\prec$ on the boxes as follows. For $1 \leq i < j \leq n$, we say $i \prec j$ if $f(i) < f(j)$ and $i \succ j$ otherwise. By construction, this is a total ordering as any two elements are comparable, and it is well-defined by virtue of $f$ being well-defined. Furthermore, also by construction,  $w_{f}$ has the same inversion set as the word $f$. In other words, for all $1 \leq i < j \leq n$, $i \prec j$ if and only if $f(i) < f(j)$. The statistics $\text{maj}$ and $\text{inv}^{\ast}$ depend exclusively on the inversion set, and so $f$ has the same $\text{maj}$ and $\text{inv}^{\ast}$ values as $w_{f}$. We write $\text{maj}_{\delta}(w)$ and $\text{inv}^{\ast}(w)$ for $w \in \mathfrak{S}_n.$
    
    Each permutation $w\in \mathfrak{S}_n$ determines a Gessel function $Q_{n,D^{-1}(w)}$, where $D^{-1}(w) = \{i \in [n]: w(i) > w(i+1)\}$ is the descent set of the inverse of $w$. A standard argument as in \cite[Proposition 4.3]{HHLsym} yields that 
    \[Q_{n,D^{-1}(w)} = \sum_{\substack{f : [n] \to [n]\\ w_{f} = w}} x^{f}.\]
    Therefore, we have
    \[\Phi_{\delta} = \sum_{w \in \mathfrak{S}_{n}} q^{\text{maj}_{\delta}(w)}t^{\text{inv}_{\delta}^{\ast}(w)}Q_{n,D^{-1}(w)}.\] The result follows from noting that by definition $\text{maj}(w) \geq 0.$ 
    
    
\end{proof}

The modified Macdonald functions are known to be the doubly-graded Frobenius characteristics of certain $\mathfrak{S}_n$ modules relating to the geometry of Hilbert schemes \cite{haiman2000hilbert}. In particular, the modified Macdonald functions are Schur positive. The fundamental quasi-symmetric functions correspond to the irreducible representations of $0$-Hecke algebras in a manner analogous to the Frobenius characteristic map for symmetric groups. It would be interesting to see if the functions $\Phi_{\delta}$ have a geometric interpretation or if they correspond to some family of understandable doubly-graded $0$-Hecke algebra representations. 

Notably, as the previous example demonstrated, there are $\delta$ with $\Phi_{\delta} \notin \mathbb{Z}_{\geq 0}[q,t]\{ M_{\alpha}\}_{\alpha \models n}.$ This is different from the usual modified Macdonald case, where the parameters $q,t$ correspond geometrically to equivariant K-theoretic parameters of the algebraic torus $\mathbb{C}^* \times \mathbb{C}^*$ \cite{haiman2000hilbert}. In that case, the relevant representations of that torus are polynomial, so only non-negative powers of $q$ and $t$ appear in the character $\widetilde{H}_{\mu}$. However, if one were to consider a geometric situation where the underlying torus representations were simply algebraic (i.e., rational representations are allowed) then one would see characters in $\mathbb{Z}_{\geq 0}[q^{\pm 1}, t^{\pm 1}]$. That is to say, having negative $t$ exponents would not apriori rule out a possible geometric interpretation for the $\Phi_{\delta}.$

One possible method to understand the functions $\Phi_{\delta}$ is the following $3$-parameter generalization. For a filling $f:G_{\delta} \rightarrow \mathbb{N}$ define $\mathrm{amaj}(f):= \sum_{i\in \mathrm{Des}(f)} a(i).$ Define $$\Pi_{\delta}(x_1,x_2,\dots;q,t,u):= \sum_{f: G_{\delta} \rightarrow \mathbb{N}} q^{\mathrm{maj}(f)}t^{|\mathrm{Inv}(f)|} u^{\mathrm{amaj(f)}}x^f.$$ By definition, we have the specialization $\Phi_{\delta} = \Pi_{\delta}|_{u=t^{-1}}.$ Using analogous arguments to the case of $\Phi_{\delta}$, we see that $\Pi_{\delta} \in \mathbb{Z}_{\geq 0}[q,t,u]\{ Q_{n,D}\}_{ D \in 2^{[n-1]}}.$ Consider the specialization $\Pi_{\delta}|_{q=u=1}.$ This yields 
    $$\Pi_{\delta}|_{q=u=1} = \sum_{f: G_{\delta} \rightarrow \mathbb{N}} t^{|\mathrm{Inv}(f)|} x^f.$$ Due to a result of Carlsson and Mellit \cite{ShuffleConjecture}, $\Pi_{\delta}|_{q=u=1}$ is a symmetric function related to the \textbf{chromatic quasi-symmetric function} of the Dyck path $\delta$, $\chi_{\delta}(X;t)$, via the plethystic transformation 
    $$\Pi_{\delta}(X;1,t,1) = (t-1)^{n} \chi_{\delta}\left( \frac{X}{t-1};t\right).$$ Thus we see that $\Pi_{\delta}$ simultaneously generalizes (up to a plethystic transformation) the modified Macdonald polynomials, the P-partition generating functions for unit interval orders, and the quasi-symmetric chromatic functions of Dyck paths. It is not clear to these authors whether the functions $\Pi_{\delta}$,$\Phi_{\delta}$ are examples of the weighted characteristic functions $\chi(\delta,\mathrm{wt})$ of Carlsson-Mellit for some choice of weighting $\mathrm{wt}$. We leave studying the polynomials $\Psi_{\delta},\Phi_{\delta}$, and $ \Pi_{\delta}$ in greater generality as a further research direction. 

    

\bibliographystyle{amsplain}
\bibliography{bibliography.bib}

\end{document}